\numberwithin{equation}{section}
\newcommand{\R}{{\mathbb R}}
\newcommand{\Z}{{\mathbb Z}}
\newcommand{\C}{{\mathbb C}}
\newcommand{\be}{\begin{eqnarray}}
\newcommand{\ben}{\begin{eqnarray*}}
\newcommand{\en}{\end{eqnarray}}
\newcommand{\enn}{\end{eqnarray*}}
\newcommand{\curl}{{\rm curl\,}}
\newcommand{\grad}{{\rm grad\,}}
\newcommand{\divv}{{\rm div\,}}
\newcommand{\G}{\Gamma}
\newtheorem{theorem}{Theorem}[section]
\newtheorem{lemma}[theorem]{Lemma}
\newtheorem{corollary}[theorem]{Corollary}
\newtheorem{definition}[theorem]{Definition}
\newtheorem{remark}[theorem]{Remark}
\definecolor{rot}{rgb}{0.000,0.000,0.000}
\definecolor{rot1}{rgb}{0.000,0.000,0.000}
\definecolor{rot2}{rgb}{0.000,0.000,0.000}
\begin{document}
\renewcommand{\theequation}{\arabic{section}.\arabic{equation}}
\begin{titlepage}
\title{\bf Inverse source problems in elastodynamics}
\author{
Gang Bao\ \ ({\sf baog@zju.edu.cn}) \ \\
 {\small School of Mathematical Sciences, Zhejiang University, Hangzhou 310027, China}\\ \\
Guanghui Hu\ \ ({\sf hu@csrc.ac.cn}) \ \\
 {\small Beijing Computational Science Research Center, Beijing 100094, China}\\ \\
 Yavar Kian\ \ ({\sf yavar.kian@univ-amu.fr}) \ \\
 {\small Aix Marseille Univ, Universit\'e de Toulon, CNRS, CPT, Marseille, France}\\ \\
Tao Yin\ \ ({\sf taoyinzju@gmail.com}) \ \\
 {\small School of Mathematical Sciences, Zhejiang University, Hangzhou 310027, China}\\
 {\small Laboratoire Jean Kuntzmann, Universit\'{e} Grenoble-Alpes, 700 Avenue Centrale,
 38401 Saint-Martin-d'H\`{e}res, France}
}
\date{}
\end{titlepage}
\maketitle
\vspace{.2in}
\begin{abstract}
We are concerned with time-dependent inverse source problems in elastodynamics. The source term is supposed to be the product of a spatial function and a temporal function with compact support. We present frequency-domain and time-domain approaches to show uniqueness in determining the spatial function from wave fields on a large sphere over a finite interval. Stability estimate of the temporal function from the data of one receiver and uniqueness result using partial boundary data are proved. Our arguments rely heavily on the use of the Fourier transform, which motivated inversion schemes that can be easily implemented. A Landweber iterative algorithm for recovering the spatial function and a non-iterative inversion scheme based on the uniqueness proof for recovering the temporal function are proposed. Numerical examples are demonstrated in both two and three dimensions.

\vspace{.2in} {\bf Keywords:} Inverse source problems, Lam\'e system, uniqueness, Landweber iteration, Fourier transform.
\end{abstract}
\section{Introduction}
Consider the radiation of elastic (seismic) waves from a time-varying source term $F(x,t)$, $x\in \R^3$,
embedded in an infinite and homogeneous elastic medium. The real-valued radiated field is governed by the inhomogeneous Lam\'e system:
\be\label{lame}
\rho\partial_{tt}U(x,t)=\nabla\cdot \sigma(x,t)+F(x,t), \quad x=(x_1,x_2,x_3)\in \R^3,\; t>0
\en
together with the initial conditions
\be\label{eq:12}
U(x,0)=\partial_tU(x,0)=0,\quad x\in \R^3.
\en
Here, $\rho>0$ denotes the density,  $U=(u_1,u_2,u_3)^\top$ is the displacement vector, $\sigma=\sigma(U)$ is
the stress tensor and $F$ is the source term which causes the elastic vibration in $\R^3$.
 By Hooke's law,  the stress tensor relates to the stiffness tensor $\tilde{C}=(C_{ijkl})_{i,j,k,l=1}^3$
 via the identity $\sigma(U):=\tilde{C}:\nabla U$,
where the action of $\tilde{C}$ on a matrix $A=(a_{ij})_{i,j=1}^3$ is defined as
\ben
\tilde{C}:A=(\tilde{C}:A)_{ij}=\displaystyle\sum_{k,l=1}^3 C_{ijkl}\; a_{kl}.
\enn
In an isotropic and homogeneous elastic medium, the stiffness tensor is characterized by
\be\label{C}
 C_{ijkl}(x)=\lambda \delta_{i,j}\delta_{k,l}+\mu ( \delta_{i,k}\delta_{j,l}+ \delta_{i,l}\delta_{j,k}).
\en where the Lam\'e constants satisfy $\mu>0, 3\lambda+2 \mu>0$,
Hence, the Lam\'e system \eqref{lame} can be rewritten as
\ben
&&\rho\,\partial_{tt}\,U(x,t)= \mathcal{L}_{\lambda,\mu} U(x,t)+F(x,t),\quad (x,t)\in \R^3\times \R^+,\\
&&\mathcal{L}_{\lambda,\mu}U:=-\mu \nabla\times\nabla\times U+
(\lambda+2\mu)\nabla\nabla\cdot U
=\mu \Delta U+(\lambda+\mu)\nabla \nabla\cdot U .
\enn
Note that the above equation has a more complex form than the scalar wave equation, because it  accounts for both longitudinal and transverse motions.
Throughout this paper it is supposed that $\rho, \lambda,\mu$ are given as a prior data and that the dependence of the source term on
time and space variables are separated, that is,
\be\label{eq:13}
F(x,t)=f(x)\;g(t).
\en
In other words,
  the source term is a product of the spatial function $f$ and the temporal function $g$.
 Moreover, we suppose that $f$ is compactly supported in the space region $B_{R_0}:=\{x: |x|<R_0\}$ and
 the source radiates only over a finite time period $[0,T_0]$ for some $T_0>0$.
 This implies that \textcolor{rot}{$g(t)= 0$} for $t\geq T_0$ and $t\leq 0$.
 The source term (\ref{eq:13}) can be regarded as an approximation of the elastic pulse and are commonly used in modeling vibration phenomena in seismology.

 \textcolor{rot2}{Inverse hyperbolic problems have attacted considerable
 attention over the last years. Most of the eixsting works treated scalar acoustic wave equations. We refer to
 Bukhgaim \& Klibanov \cite{BK1981}, Klibanov \cite{Klibanov1992},  Yamamoto \cite{Yamamoto,Ya99},
 Kha\v{\i}darov \cite{Kh}, Isakov \cite{Isakov93, Isakov}, Imanuvilov \& Yamamoto \cite{OY01,OY2001}, Choulli and Yamamoto
 \cite{CY2006}}, \textcolor{rot}{Kian, Sambou \& Soccorsi \cite{KSS16}} \textcolor{rot}{ and
 the recent work by Jiang, Liu \& Yamamoto \cite{JLY}
 for uniqueness and stability of inverse source problems using Carleman estimates},  \textcolor{rot}{and refer also to
 Fujishiro \& Kian \cite{FK} for results of recovery of a time-dependent source}.
 \textcolor{rot}{ In addition, it is worth to mention the work of Rakesh \&  Symes \cite{Rakesh1988}, dealing with coefficient
 determination problems based on the construction of appropriate geometric optic solutions.
 There are also rich references on inverse problems arising in the context of linear elasticity.
 Many investigations are devoted to mathematical and numerical techniques for the identification of
 elastic coefficients and buried objects of a geometrical nature
 (such as cracks, cavities and inclusions) in the time-harmonic regime; see e.g.,
 the review article \cite{Rew2005} by Bonnet \& Constantinescu , the monograph  \cite{Ammari} by Ammari et.al. and references therein.
 Due to our limited knowledge, we have found only
a few mathematical works on inverse source problems for the time-dependent Lam\'e system.}
 In \cite{Ammari13}, a time-reversal imaging algorithm
based on a weighted Helmholtz decomposition was proposed for reconstructing $f$ in a homogeneous isotropic medium,
where the temporal function takes the special form $g(t)= d\delta(t)/dt$.

  \textcolor{rot2}{This paper concerns uniqueness and numerical reconstructions of  $f$ or $g$  from radiated elastodynamic fields over a finite time interval.
 Such kind of inverse problems have many significant applications in biomedical engineering (see, e.g.,\cite{Ammari}) and geophysics
 (see, e.g., \cite{AR}). The Uniqueness issue is important in inverse scattering theory,
 while it provides insight into whether the measurement data are sufficient for recovering the unknowns and ensures uniqueness of global minimizers in iterative schemes.
  Being different from previously mentioned existing works,
 our uniqueness proofs rely heavily on the use of the
 Fourier transform, which motivated novel inversion schemes that can be easily implemented. Our arguments carry over the scalar wave equations without any additional difficulties.
 We believe that the Fourier-transform-based approach explored in this paper  would also lead to
 stability estimates of our inverse problems, which deserves to be further investigated in future.
We shall address the following inverse issues:
\begin{description}
 \item[(i)]  Uniqueness in recovering $f$ from emitted
waves on a closed surface surrounding the source. We present frequency-domain and time-domain approaches for recovering the spatial function.
The frequency-domain approach is of independent interest, while it reduces the time-dependent inverse problem  to an inverse scattering
problem in the Fourier domain with multi-frequency data.
Our arguments are motivated by recent studies on inverse source problems for the time-harmonic Helmholtz equation with
multi-frequency data (see e.g., \cite{BLT,BLRX,AMR,EV,BLLT}).
The time-domain approach is inspired by the Lipschitz stability estimate of source terms
for the scalar acoustic wave equation
with additional a prior  assumptions;  see, e.g., \cite{Ya99,Yamamoto, OY01, OY2001}. A Landweber iterative algorithm is proposed
for recovering $f$ in 2D and numerical tests are presented to show validity and effectiveness of
the proposed inversion scheme; see Section \ref{test:spatial}.
\item[(ii)] Stability estimate of $g$ from measured data of one receiver.
Under the assumption that the spatial function  does not vanish at the position of the receiver, we
estimate a vector-valued temporal functions in
Section \ref{sec:temporal}. The stability estimate relies on an explicit expression of the solution in terms of $f$
and $g$. Such an idea seems well-known in the case of scalar acoustic wave equations, but to the best of our knowledge not
available for time-dependent Lam\'e systems.
\item[(iii)] Unique determination of $g$ from partial boundary measurement data.
If the spatial function $f$ is known to be not a non-radiating source (see Definition \ref{def}), we prove that
the temporal function $g$ can be uniquely determined by the time-domain data on any
subboundary of a large sphere; see Theorem \ref{TH:temporal}. The uniqueness proof is based on the Fourier
transform and yields a non-iterative inversion scheme in subsection \ref{subsect:temporal}. Numerical examples are demonstrated to verify our theory.
\end{description}
The remaining part is organized as following. In Section \ref{Pre},
preliminary studies of the time-dependent Lam\'e system are carried out.
Unique determination of spatial and temporal functions will be presented
in Sections \ref{sec:spatial} and \ref{sec:temporal}, respectively.
In particular, as a bi-product of the Fourier-domain approach presented in subsection \ref{sec:Fouier}, we show uniqueness
in recovering a source term of the time-dependent Schr\"odinger equation.
Finally,
Numerical tests are reported in Section \ref{sec:numerics} and
proofs of several lemmas are postponed to the appendix in Section \ref{Appendix}. }

\section{Preliminaries}\label{Pre}

\textcolor{rot}{For all $r>0$, we denote by $B_r$ the open ball of $\R^3$ defined by $B_r:=\{x\in \R^3:\ |x|<r\}$.}
By Helmholtz decomposition, \textcolor{rot}{the function $f\in (L^2(\R^3))^3$ supported in $B_{R_0}$}  admits a unique decomposition of the form (see Lemma \ref{HD-uniqueness} in the Appendix)
\be\label{f}
f(x)=\nabla f_p(x)+\nabla\times f_s(x),\quad \nabla\cdot f_s\equiv 0,
\en
where $f_p\in H^1(B_{R_0})$, $f_s\in H_{\curl}(B_{R_0}):=\{u: u\in (L^2 (B_{R_0}))^3, \curl u\in (L^2 (B_{R_0}))^3\}$ also have compact support in $B_{R_0}$. \textcolor{rot}{We choose also $g\in\mathcal C(\R)$ supported in $[0,T_0]$.}
 By the completeness theorem (see \cite[Theorem 3.3]{A73} or \cite[Chapter 4.1.1]{AR}), there exist vector-valued functions $U_p(x,t)$ and $U_s(x,t)$ such that $U(x,t)$ can be expressed as
\be\label{Ud}
U=U_p+U_s,\quad U_p=\nabla\,u_p,\quad U_s=\nabla\times\,u_s,\quad \nabla\cdot\,u_s=0.
\en
Moreover, the scalar function $u_p$ and the vector function $u_s$ satisfy the inhomogeneous wave equations
\be\label{eq:3}
\frac{1}{c_\alpha^2}\,\partial_{tt}\,u_{\alpha}-\Delta u_\alpha=\frac{1}{\gamma_\alpha}f_\alpha(x)g(t)\quad\mbox{in}\quad\R^3\times(0,+\infty),\qquad \alpha=p,s,
\en
together with the initial conditions
\ben
u_\alpha|_{t=0}=\partial_t u_{\alpha}|_{t=0}=0\quad\mbox{in}\quad\R^3.
\enn
Note that
\be\label{2.3}
c_p:=\sqrt{(\lambda+2\mu)/\rho}, \quad c_s:=\sqrt{\mu/\rho},\quad\;\gamma_p:=\lambda+2\mu,\;\gamma_s:=\mu,
\en
\textcolor{rot2}{and that $\lambda+2\mu>0$ since $\mu>0$, $3\lambda+2\mu>0$.}
This implies that $U_p$ and $U_s$ propagate at different wave speeds, which will be referred to as compressional waves (or simply P-waves) and shear waves (or simply S-waves), respectively.

 It is well-known that the
electrodynamic Green's tensor $G(x,t)=(G_{ij}(x,t))_{i,j=1}^3\in \C^{3\times 3}$, which satisfies
\ben
&&\rho \partial_{tt} G(x,t)e_j-\nabla\cdot \sigma(x,t)=-\delta(x)\delta(t)e_j,\quad j=1,2,3,\\
&& G(x,0)=\partial_t G(x,0)=0,\quad x\neq y,
\enn
is given by (see e.g., \cite{Costabel})
\be\nonumber
&&G_{i,j}(x,t)=\frac{1}{4\pi\rho|x|^3}\left\{ t^2\left( \frac{x_jx_k}{|x|^2}\delta(t-|x|/c_p)+(\delta_{jk}-\frac{x_jx_k}{|x|^2})\delta(t-|x|/c_s)
 \right)\right\} \\ \label{G}
&&\qquad\qquad+\frac{1}{4\pi\rho|x|^3}\left\{ t \left(3\frac{x_jx_k}{|x|^2}-\delta_{jk}\right)\left(\Theta(t-|x|/c_p)-\Theta(t-|x|/c_s)   \right)\right\}.
\en
Here, $\delta_{ij}$ is the Kronecker symbol, $\delta$ is the Dirac distribution, $\Theta$ is the Heaviside function and
$e_j$ ($j=1,2,3$) are the unit vectors in $\R^3$.
Physically, the Green's tensor $G(x,t)$ is the response of the Lam\'e system to a point body force at the origin that emits an impulse at time $t=0$.
Using the above Green's tensor, the solution $U$ to the inhomogeneous Lam\'e system \eqref{lame}  can be represented as
\be\label{U}
U(x,t)=\int_{0}^\infty\int_{\R^3} G(x-y, t-s) f(y)g(s)\,dx ds,\quad x\in \R^3,\;\textcolor{rot}{t\in\mathbb R}.
\en
\textcolor{rot}{Note that, since supp$(g)\subset[0,+\infty)$, for every $t\in(-\infty,0]$ and $x\in \mathbb R^3$, we  have $U(x,t)=0$.}
Throughout the paper we define
\be\label{Tps}
T_p:=T_0+(R+R_0)/c_p,\quad T_s:=T_0+(R+R_0)/c_s,
\en
for some $R>R_0$.
Obviously, it holds that $T_s>T_p$, since $c_p>c_s$ by (\ref{2.3}).
The following lemma states that the wave fields over $B_R$ must vanish after a finite time that depends on $R$ and the support of  $f$ and $g$.
\begin{lemma}\label{lem:1}
We have $U(x,t)\equiv 0$ for all $x\in B_R$ and  $t>T_s$.
\end{lemma}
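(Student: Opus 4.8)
The plan is to read off the conclusion from the explicit Green's tensor \eqref{G} and the representation formula \eqref{U}, using nothing more than support bookkeeping. The first point to establish is that, for each fixed $x\neq 0$, the distribution $t\mapsto G(x,t)$ is supported in the closed interval $[\,|x|/c_p,\,|x|/c_s\,]$: the two Dirac terms $\delta(t-|x|/c_p)$ and $\delta(t-|x|/c_s)$ sit at the endpoints, and the factor $\Theta(t-|x|/c_p)-\Theta(t-|x|/c_s)$ vanishes for $t<|x|/c_p$ and for $t>|x|/c_s$ (recall $c_p>c_s$, so $|x|/c_p<|x|/c_s$), the $|x|^{-3}$ prefactor being harmless for the temporal support. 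Including the point $x=0$ (where $|x|/c_p=|x|/c_s=0$), this says $\operatorname{supp}G\subseteq\{(x,t)\in\R^3\times\R:\ |x|/c_p\le t\le |x|/c_s\}$.

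Next I would view \eqref{U} as the space–time convolution $U=G*(f\otimes g)$, which is well defined here since $f$ is compactly supported in $B_{R_0}$ and $g$ is compactly supported in $[0,T_0]$. Hence
\[
\operatorname{supp}U\ \subseteq\ \operatorname{supp}G+\bigl(\overline{B_{R_0}}\times[0,T_0]\bigr).
\]
Unwinding this: if $U(x,t)\neq 0$ then $x=x'+y$ and $t=t'+s$ for some $y$ with $|y|\le R_0$, some $s\in[0,T_0]$, and some $(x',t')\in\operatorname{supp}G$; in particular $t'\le|x'|/c_s=|x-y|/c_s$, so
\[
t=t'+s\ \le\ \frac{|x-y|}{c_s}+T_0\ \le\ \frac{|x|+R_0}{c_s}+T_0 .
\]
For $x\in B_R$ this gives $t<(R+R_0)/c_s+T_0=T_s$. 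The contrapositive is exactly the claim: $U(x,t)=0$ for $x\in B_R$ and $t\geq T_s$.

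An alternative route, avoiding the distributional tensor, is to use the Helmholtz decomposition \eqref{Ud}–\eqref{eq:3}: each potential $u_\alpha$ ($\alpha=p,s$) solves a wave equation with speed $c_\alpha$, zero Cauchy data, and source $\gamma_\alpha^{-1}f_\alpha(x)g(t)$ supported in $\overline{B_{R_0}}\times[0,T_0]$, so the retarded-potential formula (strong Huygens principle in $\R^3$) yields $u_\alpha(x,t)=0$ whenever $t>T_0+(|x|+R_0)/c_\alpha$, hence $u_\alpha(x,t)=0$ for all $x\in\overline{B_R}$ and $t>T_\alpha$. Since $T_s>T_p$, for $t>T_s$ both $u_p(\cdot,t)$ and $u_s(\cdot,t)$ vanish identically on $B_R$, so do all their derivatives there, and therefore $U=\nabla u_p+\nabla\times u_s\equiv0$ on $B_R$ for $t>T_s$. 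Either way there is no real obstacle; the only points needing a little care are checking that the $\Theta$-difference term in \eqref{G} is genuinely supported in $[\,|x|/c_p,\,|x|/c_s\,]$ (so the $|x|^{-3}$ factor does not enlarge the support) and justifying the ``support of a convolution'' inclusion for the distributions at hand — both routine thanks to the compact supports of $f$ and $g$.
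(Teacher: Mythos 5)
Your main argument is correct and is essentially the paper's proof in disguise: the paper plugs the Green's tensor \eqref{G} into \eqref{U}, integrates out the Dirac terms, and observes that $g(t-|x-y|/c_\alpha)$ and the Heaviside difference vanish for $t>T_s$ uniformly over $x\in B_R$, $y\in B_{R_0}$, $s\in(0,T_0)$, which is exactly your support bookkeeping (supp$\,G(x,\cdot)\subset[|x|/c_p,|x|/c_s]$ plus the compact supports of $f$ and $g$) phrased as a convolution-support inclusion. Your alternative route via the Helmholtz decomposition and the retarded potentials is also sound and matches the representation the paper itself uses later in the proof of Theorem \ref{Theorem-1}(ii).
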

\begin{proof} For $x=(x_1,x_2,x_3)^\top$, $y=(y_1,y_2,y_3)^\top\in \R^3$, write $x\otimes y= x y^\top\in \R^{3\times 3}$ and $\hat{x}=x/|x|$ for simplicity. Introduce
\ben
V(x,t)=\Theta(t-|x|/c_p)-\Theta(t-|x|/c_s).
\enn
Combining (\ref{U}) and (\ref{G}), we have
\ben
U(x,t)&=&\int_{0}^\infty\int_{\R^3} \frac{(t-s)^2 (\hat{x}-\hat{y})\otimes(\hat{x}-\hat{y})}{4\pi\rho|x-y|^3} \; \delta(t-s-\frac{|x-y|}{c_p}) f(y)g(s)\,dy ds\\
&&+\int_{0}^\infty\int_{\R^3} \frac{(t-s)^2 [\textbf{I}- (\hat{x}-\hat{y})\otimes(\hat{x}-\hat{y})]}{4\pi\rho|x-y|^3} \delta(t-s-\frac{|x-y|}{c_s}) f(y)g(s)\,dy ds\\
&&+\int_{0}^\infty\int_{\R^3} \frac{(t-s) [3(\hat{x}-\hat{y})\otimes(\hat{x}-\hat{y}) -\textbf{I }] }{4\pi\rho|x-y|^3} V(x-y,t-s)\, f(y)g(s)\,dyds\\
&=&\int_{|y-x|\leq c_p(\textcolor{rot2}{t+T_0})} \frac{ (\hat{x}-\hat{y})\otimes(\hat{x}-\hat{y})}{4\pi(\lambda+2\mu)|x-y|} \; g(t-\frac{|x-y|}{c_p}) f(y)\,dy ds\\
&&+\int_{|y-x|<c_s(\textcolor{rot2}{t+T_0})} \frac{ \textbf{I}- (\hat{x}-\hat{y})\otimes(\hat{x}-\hat{y})}{4\pi\mu|x-y|} g(t-\frac{|x-y|}{c_s}) f(y)\,dy ds\\
&&+\int_{0}^{T_0}\int_{B_R} \frac{(t-s) [3(\hat{x}-\hat{y})\otimes(\hat{x}-\hat{y}) -\textbf{I }] }{4\pi\rho|x-y|^3} \,V(x-y,t-s)\, f(y)g(s)\,dyds,
\enn
where  $\textbf{I}$ denotes the 3-by-3 unit matrix.
For $t>T_0+(R+R_0)/c_s$, one can readily observe that
\ben
g(t-\frac{|x-y|}{c_s})=g(t-\frac{|x-y|}{c_p})=0,\quad V(x-y,t-s)=0
\enn
uniformly in all $x\in B_R, y\in B_{R_0}$ and $s\in (0,T_0)$, which implies the desired result.
\end{proof}

Denote by $\hat{f}$ the Fourier transform of $f$ with respect to $t\in \R$, that is,
\textcolor{rot}{\ben
\textcolor{rot}{\hat{f}(\omega)=\mathcal{F}_{t\rightarrow \omega}[f]:=\int_\R f(t)\exp(i\omega t)\,dt},\quad \omega\in \R.
\enn}
Denote by $\hat{G}=\hat{G}(x,\omega)$ the Fourier transform of $G(x,t)$ with respect to $t$,
and define the compressional and shear waves numbers $k_p$ and $k_s$ in the Fourier domain as
\ben
k_p:=\omega\sqrt{\rho/(\lambda+2\mu)},\quad
k_s:=\omega\sqrt{\rho/\mu}.
\enn
Then we find that
\ben
\mu \Delta \hat{G}(\cdot,\omega)e_j+(\lambda+\mu)\nabla (\nabla\cdot \hat{G}(\cdot,\omega)e_j)+\omega^2\rho \hat{G}(\cdot,\omega)e_j =-\delta(\cdot)e_j,\quad j=1,2,3
\enn
and
\be\label{Pi}
\hat{G}(x-y,\omega)=\frac{1}{\mu}\Phi_{k_s}(x,y) \textbf{I}+\frac{1}{\rho\omega^2}\, \grad_x\,\grad_x^\top\;\left[\Phi_{k_s}(x,y)-\Phi_{k_p}(x,y) \right],\quad x\neq y.
\en
Here $\Phi_k(x,y)=e^{ik|x-y|}/(4\pi|x-y|)$ ($k=k_p, k_s$) is the fundamental solution to the Helmholtz equation $(\Delta+k^2) u=0$ in $\R^3$.
By Lemma \ref{lem:1}, we may take the Fourier transform of $U(x,t)$ with respect to $t$. Consequently, it holds in the frequency domain that
\be\label{U-f}
\mu \Delta \hat{U}(x,\omega)+(\lambda+\mu)\nabla (\nabla\cdot \hat{U}(x,\omega))+\omega^2\rho \hat{U}(x,\omega)=-f(x)\hat{g}(\omega),\quad \omega\in \R.
\en
Corresponding to the representation of $U(x,t)$ in the time domain, we have in the Fourier domain that
\be\label{U-hat}
\hat{U}(x,\omega)=\int_{\R^3} \mathcal{F}[G(x-y,\cdot)\ast g(\cdot)]\;f(y)dy
=\hat{g}(\omega)\;\int_{\R^3} \hat{G}(x-y,\omega)f(y)\,dy,\quad x\in \R^3,\quad\omega\in \R^+.
\en
\textcolor{rot}{ Here $*$ denotes the convolution product with respect to the time variable.}
Note that $\hat{U}(x,-\omega)=\overline{\hat{U}(x,\omega)}$, since $U(x,t)$ is real valued.

\section{Unique determination of spatial functions}\label{sec:spatial}
In this section we are interested in the inverse source problem of recovering $f$ from the radiated wave field
$\{U(x,t): |x|=R, t>T\}$ for some $R>R_0$ and $T>T_0$ under the a prior assumption
that $g$ is given. We suppose that
\textcolor{rot}{$f\in (L^2(\mathbb R^3))^3$, supp$(f)\subset B_{R_0}$, $g\in \mathcal C_0([0,T_0])$.}
Since $f$ and $g$ have compact support,
the initial boundary value problem (\ref{lame}), (\ref{eq:12}) and (\ref{eq:13})
admits a unique solution
\textcolor{rot}{ $U\in \mathcal{C} (\mathbb R, H^1(B_R))^3\cap \mathcal C^1(\mathbb R, L^2(B_R))^3$
for any $R>0$.}
 Let $f_\alpha$ and $U_\alpha$ ($\alpha=p,s$) be specified as in (\ref{f}) and (\ref{Ud}),
 respectively. Our uniqueness results are stated as following.
\begin{theorem}\label{Theorem-1} (i) The data set
$\{U(x, t): |x|=R, t\in(0, T_s)\}$ uniquely determines the spatial function $f$.
(ii) The data set of pure P- and S-waves, $\{U_\alpha(x, t): |x|=R, t\in(0, T_\alpha)\}$, uniquely determines $f_\alpha$ ($\alpha=p,s$).
\end{theorem}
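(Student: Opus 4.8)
The plan is to work entirely in the frequency domain. Set $\widetilde f(\xi):=\int_{\R^3}e^{-i\xi\cdot y}f(y)\,dy$ for the spatial Fourier transform, write $\hat x=x/|x|$, $r=|x|$, recall $k_\alpha=\omega/c_\alpha$, and note that we may assume $g\not\equiv0$ (otherwise the statement is vacuous), so that, $g$ being continuous and compactly supported, $\hat g$ extends to a nontrivial entire function by Paley--Wiener and its real zero set $Z$ is discrete. \emph{First,} reduce the data to multi-frequency boundary data: by causality $U(\cdot,t)\equiv0$ for $t\le0$ and by Lemma \ref{lem:1} $U(x,t)=0$ for $x\in B_R$, $t\ge T_s$, so the trace of $U$ on $|x|=R$ over $(0,T_s)$ determines $U(x,\cdot)$ for all $t\in\R$ when $|x|=R$, and hence $\hat U(x,\omega)$ for $|x|=R$ and every $\omega\in\R$; likewise, since the potential $u_\alpha$ of \eqref{Ud}--\eqref{eq:3} propagates at speed $c_\alpha$, $U_\alpha$ vanishes on $|x|=R$ for $t\ge T_\alpha$, so the data in (ii) determines $\hat U_\alpha(x,\omega)$ for $|x|=R$ and all $\omega$. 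We argue by contradiction: if two spatial functions give the same data, then, writing $f$ (resp. $f_\alpha$, $\hat U$, $\hat U_\alpha$) for the differences, $\hat U\equiv0$ (resp. $\hat U_\alpha\equiv0$) on $|x|=R$ for every $\omega$. (A time-domain proof of (i) via the representation \eqref{U} and a unique-continuation/energy argument is also possible, but the Fourier route is shorter and covers both parts.)

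\emph{Next,} fix $\omega\in\R\setminus(Z\cup\{0\})$. In $\{|x|>R_0\}$ the field $\hat U$ solves the homogeneous equation \eqref{U-f} (with right-hand side $0$) and satisfies the elastic radiation condition, so the vanishing of its Dirichlet trace on $|x|=R$ forces $\hat U\equiv0$ in $\{|x|>R\}$, by uniqueness for the exterior Dirichlet problem (a consequence of Rellich's lemma for elastic waves). Substituting the large-$r$ asymptotics of the fundamental tensor \eqref{Pi} into the representation \eqref{U-hat}, and using $k_s^2/(\rho\omega^2)=1/\mu$ and $k_p^2/(\rho\omega^2)=1/(\lambda+2\mu)$, gives
\ben
\hat U(x,\omega)&=&\frac{\hat g(\omega)}{4\pi(\lambda+2\mu)}\,\frac{e^{ik_pr}}{r}\,\big(\hat x\cdot\widetilde f(k_p\hat x)\big)\,\hat x\\
&&+\,\frac{\hat g(\omega)}{4\pi\mu}\,\frac{e^{ik_sr}}{r}\,\Big(\widetilde f(k_s\hat x)-\big(\hat x\cdot\widetilde f(k_s\hat x)\big)\hat x\Big)+O(r^{-2}),\qquad r\to\infty.
\enn
Because $c_p\neq c_s$, the two leading terms oscillate at distinct rates in $r$; since $\hat U\equiv0$ in $\{|x|>R\}$, comparing the factors $e^{ik_pr}$ and $e^{ik_sr}$ forces both amplitudes to vanish, and as $\hat g(\omega)\neq0$ we obtain, for every $\hat x\in\s^2$,
\[
\hat x\cdot\widetilde f(k_p\hat x)=0\qquad\text{and}\qquad \hat x\times\widetilde f(k_s\hat x)=0.
\]

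\emph{Finally,} since $f$ is compactly supported, $\xi\mapsto\xi\cdot\widetilde f(\xi)$ and $\xi\mapsto\xi\times\widetilde f(\xi)$ are entire on $\C^3$; by the previous step they vanish on $\{k_p(\omega)\hat x\}$ and $\{k_s(\omega)\hat x\}$ respectively as $\omega$ ranges over $\R\setminus(Z\cup\{0\})$ and $\hat x$ over $\s^2$, and as $Z$ is discrete each of these sets contains a nonempty spherical shell of $\R^3$, so both functions vanish identically. Hence for every $\xi\neq0$ the vector $\widetilde f(\xi)$ is at once parallel and orthogonal to $\xi$, so $\widetilde f\equiv0$ and $f=0$; this proves (i). Part (ii) follows by the same argument applied to $\hat U_\alpha$: from \eqref{Ud}--\eqref{eq:3}, $\hat U_p=\nabla\hat u_p$ and $\hat U_s=\nabla\times\hat u_s$ with $\hat u_\alpha(\cdot,\omega)=\gamma_\alpha^{-1}\hat g(\omega)\int_{\R^3}\Phi_{k_\alpha}(\cdot,y)f_\alpha(y)\,dy$, so each Cartesian component of $\hat U_\alpha$ is a radiating solution of the scalar Helmholtz equation with wavenumber $k_\alpha$ in $\{|x|>R_0\}$; thus $\hat U_\alpha\equiv0$ on $|x|=R$ yields $\hat U_\alpha\equiv0$ in $\{|x|>R\}$, and passing to the far field gives $\widetilde f_p(k_p\hat x)\,\hat x=0$ (P-case) and $\hat x\times\widetilde f_s(k_s\hat x)=0$ (S-case). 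As above this forces $\widetilde f_p\equiv0$, resp. $\widetilde f_s\equiv0$ — in the S-case using in addition the constraint $\nabla\cdot f_s\equiv0$ of \eqref{f}, i.e. $\xi\cdot\widetilde f_s(\xi)\equiv0$ — hence $f_\alpha=0$.

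The main obstacle is the second step: one must legitimize the time-Fourier transform, which is exactly what the finite-time support in Lemma \ref{lem:1} supplies; invoke the correct exterior uniqueness/Rellich statement for the vector operator in \eqref{U-f} so as to propagate the vanishing from $|x|=R$ to all of $\{|x|>R\}$ and extract the elastic far field from \eqref{Pi}; and — the key structural point — use $c_p\neq c_s$ to separate the compressional and shear contributions. The remaining ingredients (Paley--Wiener regularity of $\hat g$ and of $\widetilde f$, and analytic continuation from a spherical shell to all of $\R^3$) are routine.
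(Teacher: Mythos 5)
Your argument is correct, and it is a recognizable cousin of the paper's frequency-domain proof rather than a copy of it. You share the same skeleton: causality plus Lemma \ref{lem:1} (and its scalar analogue for $U_\alpha$, which in the paper is justified via the Kirchhoff/retarded-potential formula — your ``propagates at speed $c_\alpha$'' is really Huygens' principle in 3D, worth saying explicitly) legitimizes the time-Fourier transform; the Kupradze radiation condition from \eqref{U-hat} plus exterior Dirichlet uniqueness converts vanishing Dirichlet data on $|x|=R$ into information about the source; and Paley--Wiener analyticity of $\hat g$ and of the spatial Fourier transform finishes the job. Where you diverge is the extraction step: the paper pairs \eqref{U-f} with the plane waves $v_p=d\,e^{-ik_pd\cdot x}$, $v_s=d^\perp e^{-ik_sd\cdot x}$ via Betti's formula (using exterior well-posedness only to kill $T_\nu\hat U$ on $|x|=R$) and thereby reads off $\hat f_p(k_pd)$ and $\hat f_s(k_sd)\cdot(d\times d^\perp)$ through the Helmholtz decomposition of $f$, whereas you use the full exterior vanishing of $\hat U$ together with the far-field expansion of \eqref{Pi} and the polarization/wavenumber separation ($e^{ik_pr}$ longitudinal vs. $e^{ik_sr}$ transverse) to conclude $\xi\cdot\widetilde f(\xi)=0$ and $\xi\times\widetilde f(\xi)=0$ on shells, then everywhere by analyticity. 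The two devices compute essentially the same data, but your route has two small advantages: in (i) you never need to decompose $f$, and in (ii) you conclude $\widetilde f_p\equiv0$ and $\widetilde f_s\equiv0$ directly (using $\xi\cdot\widetilde f_s\equiv0$ from \eqref{f}), thereby bypassing the paper's final step through Lemma \ref{HD-uniqueness}; the price is that you must justify the far-field asymptotics of the Kupradze tensor uniformly for $y$ in the support of $f$ and make the ``compare $e^{ik_pr}$ with $e^{ik_sr}$'' separation rigorous (e.g.\ by projecting onto $\hat x$ and $\hat x^\perp$, which in fact makes the $c_p\neq c_s$ argument unnecessary). The paper also gives an independent time-domain proof via Duhamel and unique continuation, which you mention but do not pursue.
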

\textcolor{rot2}{ We  remark that, since the measurement surface is spherical, the compressional and shear components $U_\alpha(x,t)$ ($\alpha=p,s$) can be decoupled
from the whole wave fields $U(x,t)$ on $|x|=R$. In fact, in the Fourier domain, $\hat{U}_\alpha(x,\omega)$ can be decoupled from $\hat{U}(x,\omega)$ on $|x|=R$ for every
fixed $\omega\in \R^+$; see e.g., \cite{BHSY} or Section \ref{test:spatial} in the 2D case. Hence the decoupling in the time domain can be achieved via Fourier transform.}
Below we present a frequency-domain approach and a time-domain approach to the proof of Theorem \ref{Theorem-1}.

\subsection{ Frequency-domain approach}\label{sec:Fouier}
{\bf Proof of Theorem \ref{Theorem-1}.}
(i) Assuming that $U(x, t)\textcolor{rot}{=} 0$ for all $|x|=R$ and $t\in(0, T_s)$, we need to prove that $f\equiv 0$ in $B_{R_0}$.
Recalling Lemma \ref{lem:1}, we have $U_\alpha(x, t)\textcolor{rot}{=} 0$ for all $|x|=R$, $t\in\R^+$.
\textcolor{rot}{ Combining this with the fact that $U(x,t)=0$, $(x,t)\in\mathbb R^3\times(-\infty,0]$,
we deduce that $U_\alpha(x, t)= 0$ for all $|x|=R$, $t\in\R$.
Then, applying the Fourier transform in time to $U_\alpha(x,\cdot)$ gives
$$\textcolor{rot1}{\hat{U}_\alpha(x, \omega)=\int_\R U_\alpha(x,t)e^{i\omega t}dt}=0,\quad\mbox{for all}\quad |x|=R,\ \omega\in\R^+.$$} Introduce the functions
\ben
v_p(x,\omega):=d\, e^{-i k_p d\cdot x},\quad v_s(x,\omega):=d^\perp\, e^{-i k_s d\cdot x},\quad d\in \mathbb{S}^2:=\{x\in \R^3: |x|=1\},
\enn
where $k_\alpha=k_\alpha(\omega)$ ($\alpha=p,s$) are the compressional and shear wave numbers, respectively, and
$d^\perp\in \mathbb{S}^2$ stands for a unit vector that orthogonal to $d$. Physically, $v_p$ and $v_s$
denote the compressional and shear plane waves propagating along the direction $d$, respectively.
They fulfill the time-harmonic Navier equation as follows
\ben
\mu \Delta v_\alpha+(\lambda+\mu)\nabla (\nabla\cdot v_\alpha)+\omega^2\rho v_\alpha=0,\quad \alpha=p,s.
\enn

Multiplying $v_\alpha$ to (\ref{U-f}) and applying Betti's formula to $\hat{U}$ and $v_\alpha$ in $B_R$, we obtain
\ben
-\hat{g}(\omega)\int_{B_R} f(x)\cdot v_\alpha (x,\omega)\,dx=\int_{|x|=R} \left[ T_{\nu} \hat{U}(x,\omega)\cdot v_\alpha(x,\omega)
-T_{\nu}v_\alpha(x,\omega)\cdot \hat{U}(x,\omega)\right]\,ds,
\enn
where $\nu=(\nu_1,\nu_2,\nu_3)^\top\in \mathbb{S}^2$ is the normal direction on $|x|=R$ pointing into $|x|>R$ and
$T_\nu=T_\nu^{(\lambda,\mu)}$ is the traction operator defined by
\be\label{stress-3D}
T_{\nu}\hat{U}:=2 \mu \, \partial_{\nu} \hat{U} + \lambda \,
\nu \, \divv \hat{U}+\mu \nu\times \curl \hat{U}.
\en
It follows from (\ref{U-hat}) that $\hat{U}(x,\omega)$ satisfies the Kupradze radiation when $|x|\rightarrow\infty$.
By well-posedness of the Dirichlet boundary value problem for the time-harmonic Navier system in $|x|>R$,
we obtain $T_{\nu}\hat{U}(x,\omega)\equiv0$ for all $|x|=R$, $\omega\in\R^+$. This also follows from the well-defined
Dirichlet-to-Neumann operator applied to $\hat{U}|_{|x|=R}$ for fixed $\omega\in \R^+$; see e.g., \cite{BHSY}.
Hence,
\be\label{eq:1}
\hat{g}(\omega)\int_{B_R} f(x)\cdot v_\alpha (x,\omega)\,dx=0,\quad \mbox{for all}\quad \omega\in \R^+.
\en
 \textcolor{rot2}{On the other hand,  applying integration by parts we get
\ben
\int_{\R^3} \nabla  \times  f_s(x)\cdot de^{-ik_p x\cdot d} dx=
-\int_{\R^3}   f_s(x)\cdot \nabla  \times (de^{-ik_p x\cdot d}) dx=0,
\enn
in which the boundary integral over $\partial B_R$ vanish due to the compact support of $f$ in $B_{R_0}\subset B_R$.}
It then follows
\ben
&\quad&\int_{B_R} f(x)\cdot v_p (x,\omega)\,dx\\
&=&\int_{\R^3} \nabla  f_p(x)\cdot d\,e^{-ik_s x\cdot d} \,dx+\int_{\R^3} \nabla\times  f_s(x)\cdot d\,e^{-ik_s x\cdot d} \,dx\\
&=&ik_s(2\pi)^{\frac{3}{2}}\;\hat{f_p}(k_pd).
\enn
\textcolor{rot2}{Note that here $\hat{f_p}$ refers to the Fourier transform of $f_p$ with respect to spatial variables, given by
$$ \hat{f_p}(\xi):=\int_{\R^3}f(x)e^{ix\cdot\xi}dx,\quad \xi\in\R^3.$$}
In the same way, we have
\ben\int_{\R^3} \nabla  f_p(x)\cdot d^\perp\,e^{-ik_s x\cdot d}\,dx
=-\int_{\R^3}   f_p(x)\;\nabla\cdot (d^\perp\,e^{-ik_s x\cdot d})\,dx
=ik_s(d\cdot d^\perp)\int_{\R^3}   f_p(x)\,e^{-ik_s x\cdot d}\,dx=0\enn
and we find
\ben\int_{B_R} f(x)\cdot v_s (x,\omega)\,dx=\int_{\R^3} \nabla\times f_s(x)\cdot d^\perp\,e^{-ik_s x\cdot d} \,dx
=ik_s(2\pi)^{\frac{3}{2}}\,\hat{f_s}(k_sd)\cdot (d\times d^\perp).
\enn
 Therefore, it follows from (\ref{eq:1}) that
\ben
\hat{f_p}(k_pd)=\hat{f_s}(k_sd)\cdot (d\times d^\perp)=0
\enn for all $d\in \mathbb{S}^2$ and for all $\omega\in\{\omega\in \R^+ : \hat{g}(\omega)\neq 0\}$. Since $g\neq 0$, one can always find an interval $(a,b)\subset\R^+$ such that $\hat{g}(\omega)\neq 0$ for $\omega\in(a,b)$. By the analyticity of $\hat{f}_\alpha$ ($\alpha=p,s$) and the arbitrariness of $d\in\mathbb{S}^2$, we finally obtain $\hat{f}_\alpha\equiv 0$. Applying inverse Fourier transform we get $f_\alpha\equiv 0$, implying that $f\equiv 0$.

(ii) By (\ref{eq:3}), the P- and S-waves fulfill the wave equations

\ben
&&\frac{1}{c_p^2}\,\partial_{tt}\,U_{p}(x,t)-\Delta U_p(x,t)=\frac{1}{\gamma_p}\,\nabla f_p(x)g(t),\\
&&\frac{1}{c_s^2}\,\partial_{tt}\,U_{s}(x,t)-\Delta U_s(x,t)=\frac{1}{\gamma_s}\,\nabla\times f_s(x)\,g(t),
\enn
in $\R^3\times(0,+\infty)$ together with the zero initial conditions at $t=0$, where $c_\alpha$ and $\gamma_\alpha$ ($\alpha=p,s$)
are given in (\ref{2.3}).
Applying  Duhalme's principle and Kirchhoff's formula for wave equations,
we can represent these P and S-waves as
\ben
U_p(x,t)=\frac{1}{4\pi\gamma_p}\int_{|y-x|\leq c_p(t+T_0)} \frac{\nabla f_p(y)g(t-|y-x|/c_p)}{|y-x|}dy,\\
U_s(x,t)=\frac{1}{4\pi\gamma_s}\int_{|y-x|\leq c_s(t+T_0)} \frac{\nabla\times f_s(y)g(t-|y-x|/c_s)}{|y-x|}dy,
\enn
for $x\in\R^3$, $t>0$. As done for the Navier equation in the proof of Lemma \ref{lem:1}, one can show that $U_\alpha(x,t)= 0$ for all $x\in B_R$, $t>T_\alpha$ ($\alpha=p,s$). Hence, the relation $U_\alpha(x,t)= 0$ for $x\in B_R$, $t\in(0,T_\alpha)$ would imply the vanishing of $U_\alpha(x,t)$ over $B_R$ for all $t\in \R^+$. Now, repeating the argument in the proof of the first assertion we deduce that
\ben
\nabla f_p=0,\quad \nabla\times f_s=0,\quad \divv f_s=0\quad\mbox{in}\quad B_R.
\enn
This implies that $f_p\equiv 0$, since $f_p\in H^1(B_R)$ and $f_p=0$ in $B_R\backslash B_{R_0}$.
To prove the vanishing of $f_s\in (L^2(B_R))^3$,
we apply the Helmholtz decomposition to $f_s$, i.e., $f_s=\nabla h_p+\curl\,h_s$, where $h_p\in H^1(B_R)$ and $h_s\in H_{\curl}(B_R)$
are compactly supported in $B_R$. Then it follows that $h_p=h_s\equiv 0$ in $B_R$ and thus $f_s\equiv 0$ in $B_R$;
see the proof of Lemma \ref{HD-uniqueness} in the Appendix.
\hfill $\Box$
\begin{remark}
The above proof of Theorem \ref{Theorem-1} by using Fourier transform is valid in odd dimensions only. The vanishing of the wavefields on $|x|=R$ for $t>T_s$ can be physically interpreted by Huygens' Principle, which however does not hold
 when the number of spatial dimensions is even.
The frequency-domain approach applies to two dimensions if we know the time-domain data for all $0<t<\infty$.
\end{remark}

As a bi-product of the frequency-domain approach to the proof of Theorem \ref{Theorem-1}, we show uniqueness
in recovering the source term of the time-dependent Schr\"odinger equation:
\be\label{W}
\left\{\begin{array}{lll}
i\hbar \partial_t W(x,t)=[-\frac{\hbar^2}{2\mu}\Delta+q(x)]W(x,t)+f_0(x)g_0(t)&&\mbox{in}\quad \R^3\times (0,+\infty)\\
W(x,0)=0&&\mbox{on}\quad \R^3,
\end{array}\right.
\en
where $\hbar$ is the reduced Planck constant, $\mu$ is the particle's reduced mass and $q$ is the particle's potential energy which is assumed to be
time-independent. Similar to the Lam\'e system, we shall assume that $f_0\in L^2(\R^3)$, $\mbox{supp}(f_0)\subset B_{R_0}$,
$g_0\in H^1_0(0,T_0)$.
The potential is supposed to be a real-valued \textcolor{rot}{nonnegative} function with compact support on $\overline{B}_R$
 for some $R>R_0$. The number
$\omega\in \C$ is called a Dirichlet eigenvalue of the operator $L_{\tilde{q}}:=\Delta-\tilde{q}$ with
$\tilde{q}(x):=2\mu/\hbar^2 q(x) $ if there exists a non-tirival function $V \in (H^1_0(B_R))^2$ such that
\ben
(L_{\tilde{q}}+\omega )V=0\quad\mbox{in}\quad B_R.
\enn
It can be easily  proved that the set of Dirichlet eigenvalues is discrete, which we donote by $\{\omega_n\}_{n=1}^\infty$, and that each eigenvalue is positive.
According to   \cite[Theorem 10.1, Chapter 3]{LM} and \cite[Remark 10.2, Chapter 3]{LM},
the initial problem \eqref{W} admits a unique solution
$W\in \mathcal C([0,+\infty);H^1(\R^3))\cap \mathcal C^1([0,+\infty);H^{-1}(\R^3))$.
Therefore, we can introduce the data $\{W(x,t): |x|=R, t\in \R^+\}$, for $W$
the unique solution of \eqref{W}.
\textcolor{rot2} { The following result extends the uniqueness proof of the inverse source problem for the Helmholtz equation \cite{EV} to the case of
time-dependent Lam\'e system with an inhomogeneous time-independent potential function. }
\begin{corollary}
Assume that $q\in\mathcal{C}_0(B_R)$ is known and that $\hat{g}_0(\omega_n')\neq 0$, $\omega_n'=\omega_n \hbar/(2\mu)$, for all $n=1,2,\cdots$. Then the data set
$\{W(x,t): |x|=R, t\in \R^+\}$ uniquely determines $f_0$.
\end{corollary}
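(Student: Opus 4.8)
The plan is to transplant the frequency-domain argument used for the Lam\'e system, replacing the plane-wave test functions (unavailable here because of the potential $q$) by the Dirichlet eigenfunctions of $L_{\tilde q}$ on $B_R$. By linearity it suffices to show: if $W$ solves \eqref{W} with source $f_0(x)g_0(t)$ and $W(x,t)=0$ for all $|x|=R$, $t>0$, then $f_0\equiv0$. Extend $W$ by zero to $t\le0$. The first step is to pass to the frequency domain and check that, for each $\omega\neq0$, the temporal Fourier transform $\hat W(\cdot,\omega)$ is a genuine function in $H^1_{\mathrm{loc}}(\R^3)$ solving
\[
\bigl(L_{\tilde q}+\tfrac{2\mu\omega}{\hbar}\bigr)\hat W(\cdot,\omega)=-\tfrac{2\mu}{\hbar^2}\,\hat g_0(\omega)\,f_0\quad\text{in }\R^3,\qquad \hat W(\cdot,\omega)=0\ \text{on }\{|x|=R\}.
\]
For $\omega<0$ this is immediate since $2\mu\omega/\hbar<0\notin\sigma(H)=[0,\infty)$, where $H=-\tfrac{\hbar^2}{2\mu}\Delta+q$; for $\omega>0$ it would come from the limiting absorption principle for $H$, which is available because $q\ge0$ is real and compactly supported, so that $H$ has purely absolutely continuous spectrum — no negative eigenvalues since $H\ge0$, no embedded ones by unique continuation, and $0$ is not an eigenvalue because $Hu=0$ with $u\in L^2(\R^3)$ forces $\nabla u\equiv0$ hence $u\equiv0$. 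Concretely, $\hat W(\cdot,\omega)$ is the $z\to\omega+i0$ boundary value of the holomorphic map $z\mapsto\int_0^\infty W(\cdot,t)e^{izt}\,dt$ on $\{\mathrm{Im}\,z>0\}$, which is legitimate since $W\in L^\infty(\R;H^1(\R^3))$ (the evolution is unitary on $L^2$, and commutes with $(H+1)^{1/2}$, for $t>T_0$).

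The core step is then to test this reduced equation against Dirichlet eigenfunctions. Fix $n$ and take $\omega=\omega_n'=\omega_n\hbar/(2\mu)$, so that the spectral parameter $2\mu\omega/\hbar$ equals $\omega_n$; let $V_n\in H^1_0(B_R)$ satisfy $(L_{\tilde q}+\omega_n)V_n=0$ in $B_R$, $V_n=0$ on $\{|x|=R\}$. Green's second identity on $B_R$ applied to $\hat W(\cdot,\omega_n')$ and $V_n$, using $\mathrm{supp}(f_0)\subset B_{R_0}\subset B_R$, gives
\[
\tfrac{2\mu}{\hbar^2}\,\hat g_0(\omega_n')\int_{B_R}f_0\,V_n\,dx=\int_{\{|x|=R\}}\bigl(\hat W(\cdot,\omega_n')\,\partial_\nu V_n-V_n\,\partial_\nu\hat W(\cdot,\omega_n')\bigr)\,dS=0,
\]
the boundary integral vanishing because \emph{both} $V_n$ and $\hat W(\cdot,\omega_n')$ have zero trace on $\{|x|=R\}$ — note that the radiation condition carried by $\hat W$ is never used, only its Dirichlet trace matters. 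Since $\hat g_0(\omega_n')\neq0$ by hypothesis, this yields $\int_{B_R}f_0\,V_n\,dx=0$ for every $n$.

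Finally, $L_{\tilde q}$ with the Dirichlet condition on $B_R$ is self-adjoint, bounded below, and has compact resolvent (since $\tilde q=2\mu q/\hbar^2\in L^\infty(B_R)$ is real), so $\{V_n\}$ is an orthogonal basis of $L^2(B_R)$; hence $f_0=0$ in $B_R$, and since $f_0$ is supported in $B_{R_0}\subset B_R$, $f_0\equiv0$ in $\R^3$. The hard part is the first paragraph: unlike the Lam\'e system there is no Huygens principle and $W$ has no compact temporal support — it merely disperses while keeping a bounded $L^2$ norm — so one cannot integrate in time directly, and identifying $\hat W(\cdot,\omega)$ precisely at the \emph{real} frequencies $\omega_n'$, where $L_{\tilde q}+\omega_n$ fails to be invertible under Dirichlet conditions yet remains invertible on $\R^3$ with the outgoing condition, is exactly what the limiting absorption principle supplies. (A route that bypasses this point is purely time-domain: for $t>T_0$ the restriction $W(\cdot,t)|_{B_R}$ solves the Dirichlet problem for the Schr\"odinger equation in $B_R$, so $W(\cdot,t)|_{B_R}=\tfrac1{i\hbar}\sum_n e^{-it\omega_n'}\hat g_0(\omega_n')\langle f_0,V_n\rangle V_n$ has constant $L^2(B_R)$-norm in $t$, whereas the RAGE theorem — applicable because $H$ has no eigenvalues — forces that norm to zero, so $\langle f_0,V_n\rangle=0$ for all $n$.)
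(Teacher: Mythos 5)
Your proposal is correct, and its core is the same as the paper's: extend $W$ by zero for $t\le 0$, pass to the temporal Fourier transform, test the reduced equation at $\omega=\omega_n'$ against the Dirichlet eigenfunctions $V_n$ of $L_{\tilde q}$ on $B_R$, observe that the boundary terms in Green's identity vanish because both $V_n$ and $\hat W(\cdot,\omega_n')$ have zero trace on $\{|x|=R\}$, and conclude $f_0\equiv 0$ from $\hat g_0(\omega_n')\neq 0$ and completeness of $\{V_n\}$ in $L^2(B_R)$. Where you diverge is in the justification of the one delicate step, namely giving pointwise meaning to $\hat W(\cdot,\omega)$ at the real frequencies $\omega_n'$: you invoke the spectral theory of $H=-\tfrac{\hbar^2}{2\mu}\Delta+q$ (absence of eigenvalues for $q\ge 0$ compactly supported) and the limiting absorption principle, identifying $\hat W(\cdot,\omega)$ as the boundary value $(H/\hbar-\omega-i0)^{-1}$ applied to (a multiple of) $f_0$, whereas the paper uses the Lions--Magenes energy bound to get $W\in L^\infty(\R;H^1(\R^3))\subset\mathcal S'(\R;H^1(\R^3))$, takes the Fourier transform in the distributional sense, and then identifies $\hat W(\cdot,\omega)$ as the outgoing (Sommerfeld) solution through a Green's representation formula with $R_1\to\infty$; your LAP route is arguably the cleaner way to make this rigorous, at the price of importing heavier scattering-theoretic machinery, while the paper's route stays elementary but is looser about evaluating a tempered distribution at fixed $\omega$. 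You also correctly note that the radiation condition plays no role in the eigenfunction-testing step itself (the paper routes the boundary term through the exterior Dirichlet-to-Neumann map $T_n$, which is essentially cosmetic since both traces vanish). Finally, your parenthetical RAGE-theorem argument is a genuinely different, purely time-domain proof not in the paper: constancy of $\|W(\cdot,t)\|_{L^2(B_R)}$ for $t>T_0$ (interior Dirichlet unitarity) against local decay in time-average (no point spectrum of $H$) forces $\hat g_0(\omega_n')\langle f_0,V_n\rangle=0$; this bypasses the frequency-domain reduction entirely and is an attractive alternative, though it quietly uses uniqueness of weak solutions of the interior Dirichlet evolution to identify $W|_{B_R}$ with the unitary group applied to $W(\cdot,T_0)|_{B_R}$, which deserves a sentence of justification.
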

\begin{proof} We assume that
\be\label{assu1} W(x,t)=0,\quad |x|=R,\ t\in[0,+\infty).\en
Since $g\in H^{1}_0(0,T)$, the extension of $W$ by $0$ on $\R^3\times(-\infty,0]$, is the unique solution of
$$\left\{\begin{array}{lll}
i\hbar \partial_t W(x,t)=[-\frac{\hbar^2}{2\mu}\Delta+q(x)]W(x,t)+f_0(x)g_0(t)&&\mbox{in}\quad \R^3\times \R \\
W(x,0)=0&&\mbox{on}\quad \R^3.
\end{array}\right.$$
Thus, without lost of generality we can assume that the solution of \eqref{W} is the solution of the problem on $\R^3\times \R$. Then, condition \eqref{assu1} implies that
\be\label{assu2} W(x,t)=0,\quad |x|=R,\ t\in\R.\en
\textcolor{rot2}{According to the estimate (10.14) in the proof of}
\cite[ Theorem 10.1, Chapter 3]{LM} we have
\ben\|W(\cdot,t)\|_{H^1(\R^3)}^2&\leq& C\int_0^{+\infty} (|g_0(s)|^2+|\mbox{d}g_0(s)/\mbox{d}s|^2)\,\|f_0\|_{L^2(\R^3)}^2\,ds\\
&\leq& C\|g_0\|_{H^1(0,T_0)}^2\|f_0\|_{L^2(\R^3)}^2,
\enn
for $t\in[0,+\infty)$,
where $C>0$ is a constant independent of $t$.
In particular, this estimate and the fact that $W(x,t)=0$ for $(x,t)\in\R^3\times(-\infty,0]$, proves that $W\in L^\infty(\R;H^1(\R^3))\subset \mathcal S'(\R;H^1(\R^3))$.
Therefore, we can  apply the Fourier transform $\mathcal{F}_{t\rightarrow \omega}$ to $W$  and deduce from (\ref{W}) that $\hat{W}=\mathcal{F}_{t\rightarrow \omega}W\in \mathcal S'(\R;H^1(\R^3))$ satisfies
 \be\label{L}
 L_{\tilde{q}} \hat{W}(x,\omega)+ \eta_1 \omega\, \hat{W}(x,\omega)=\eta_2\,f_0(x)\,\hat{g}_0(\omega),
 \quad x\in \R^3,\;\omega\in \R^+,
 \en
 with $\eta_1=2\mu/\hbar, \eta_2=2\mu/\hbar^2$. Note that the identity \eqref{L} is considered in the sense of distribution with respect to
 $(x,\omega)\in \R^3\times \R^+$. \textcolor{rot}{In view of \eqref{L}, we have $\Delta \hat{W}\in \mathcal S'(\R;L^2(\R^3))$ which implies that $\hat{W}\in \mathcal S'(\R;H^2(\R^3))$.}
 \textcolor{rot} {The equation (\ref{L}) can be rewritten as
 \ben
\Delta \hat{W}(x,\omega)+ k^2\, \hat{W}(x,\omega)=\eta_2\,f_0(x)\,\hat{g}_0(\omega)+
\tilde{q}(x)\, \hat{W}(x,\omega),\quad k:=\sqrt{\eta_1\omega}.
 \enn}
 \textcolor{rot}{Recalling Green's formula, for any $R_1>R$ we may represent $\hat{W}$ as the integral equation
  \ben
 \hat{W}(x,\omega)&=&
 \int_{\partial B_{R_1+1}}\left[ \partial_\nu \hat{W}(y,\omega) \Phi_k(x-y)-\partial_\nu \Phi_k(x-y)\,\hat{W}(y,\omega)\right]\,ds(y)\\
 &&-\int_{\R^3} \Phi_k(x-y)\,\tilde{q}(y)\,\hat{W}(y,\omega)\,dy
 -\eta_2\,\hat{g}_0(\omega)\int_{\R^3}\Phi_k(x-y)f_0(y)dy
  \enn
 for $x\in B_{R_1}$, where  $\Phi_k$ is the fundamental solution to the Helmholtz equation $(\Delta+k^2) u=0$.
On the other hand, we have
$$\begin{array}{l}  \left|\int_{\partial B_{R_1+1}}\left[ \partial_\nu \hat{W}(y,\omega) \Phi_k(x-y)-\partial_\nu \Phi_k(x-y)\,\hat{W}(y,\omega)\right]\,ds\right|\\
\leq C(\|\partial_\nu \hat{W}(y,\omega)\|_{L^2(\partial B_{R_1+1})}+\|\hat{W}(y,\omega)\|_{L^2(\partial B_{R_1+1})})\left(\int_{\partial B_{R_1+1}}|x-y|^{-2}ds(y)\right)^{\frac{1}{2}}\\
\leq C(\|\partial_\nu \hat{W}(y,\omega)\|_{L^2(\partial B_{R_1+1})}+\|\hat{W}(y,\omega)\|_{L^2(\partial B_{R_1+1})})\end{array}$$
and, since $\hat{W}(\cdot, \omega)\in H^2(\R^3)$, by density we deduce that
$$\lim_{R_1\to+\infty}(\|\partial_\nu \hat{W}(y,\omega)\|_{L^2(\partial B_{R_1+1})}+\|\hat{W}(y,\omega)\|_{L^2(\partial B_{R_1+1})})=0.$$
Therefore, sending $R_1\to+\infty$, we get
  \ben
 \hat{W}(x,\omega)=-\int_{\R^3} \Phi_k(x-y)\,\tilde{q}(y)\,\hat{W}(y,\omega)\,dy
 -\eta_2\,\hat{g}_0(\omega)\int_{\R^3}\Phi_k(x-y)f_0(y)dy,\quad x\in \R^3.
  \enn }
 This implies that
 $\hat{W}(\cdot,\omega)$  is the unique solution of (\ref{L})
satisfying  the Sommerfeld radiation condition when $|x|\rightarrow \infty$.
 Let $V_n\in (H_0^1(B_R))^2$ be an eigenfunction that corresponds to the Dirichlet eigenvalue $\omega_n$. Using the fact that $\hat{W}(\cdot,\omega)\in \{S\in H^1(B_R):\ \Delta S\in L^2(B_R)\}$ and
multiplying
 $V_n$ to
 both sides of (\ref{L}) with $\omega=\omega_n'$ and applying integral by parts, we obtain
 \ben
 \eta_1\,\hat{g}_0(w_n')\int_{B_R} f_0(x) V_n(x) dx&=&\left\langle \partial_{\nu} \hat{W}_n(\cdot ;\omega_n'),V_n\right\rangle_{H^{-\frac{1}{2}}(\partial B_R),H^{\frac{1}{2}}(\partial B_R)}-
 \int_{\partial B_R}\partial_\nu V_n(x)\, \hat{W}_n(x;\omega_n')ds(x) \\
 &=&\left\langle T_n[\hat{W}(\cdot ;\omega_n')],V_n\right\rangle_{H^{-\frac{1}{2}}(\partial B_R),H^{\frac{1}{2}}(\partial B_R)}-
 \int_{\partial B_R}\partial_\nu V_n(x)\, \hat{W}_n(x;\omega_n')ds(x),
 \enn
 where $T_n: H^{1/2}(\partial B_R)\rightarrow H^{-1/2}(\partial B_R)$ is the Dirichlet-to-Neumann map for radiating solutions to the Helmholtz equation
 $(\Delta+(\omega_n')^2)u=0$
 which satisfies
 the Sommerfeld radiation condition at infinity. In view of \eqref{assu2}, the fact that
 $\hat{g}_0(w_n')\neq 0$ and the fact that $T_n$
 is a linear bounded map, for every $n=1,2,\cdots$, we deduce, from the previous identity  that
 \ben
 \int_{B_R} f_0(x) V_n(x) dx=0,\quad n=1,2,\cdots.
 \enn
 Since the set of the Dirichlet eigenfunctions is complete over $(L^2(B_R))^2$, we conclude that
 the temporal function $f_0$ can be uniquely determined by the data. This finishes the uniqueness proof.
\end{proof}

\subsection{Time-domain approach}
In this subsection we present a time-domain \textcolor{rot}{proof of Theorem  \ref{Theorem-1}}.
\textcolor{rot}{Note that this demonstration can be extended to dimension two provided that we replace the data
$\{U(x, t):\ |x|=R,\  t\in(0, T_s)\}$ by $\{U(x, t):\ |x|=R,\  t\in(0, +\infty)\}$,
since Lemma \ref{lem:1} does not hold in dimension two}.

{\bf Proof of Theorem \ref{Theorem-1}}.
(i) Again we assume that $U(x, t)= 0$ for all $|x|=R$, $t\in(0, T_s)$ and\textcolor{rot}{, in view of Lemma \ref{lem:1}, this implies that $U(x, t)= 0$ for all $|x|=R$, $t\in(0, +\infty)$}. Our aim is to deduce that $f\equiv 0$.
Since the temporal function $g$ is known,
we apply Duhalme's principle to $U$ by setting
\be\label{Duhalme}
U(x,t)=\int_0^t V(t-s,x)g(s)\,ds,\quad x\in \R^3, t>0.
\en
The function $V$ then fulfills the homogeneous Lam\'e equation with non-zero initial conditions
\ben
&&\partial_{tt}V(x,t)=-c_p^2\;\nabla\times \nabla\times V(x,t)+c_s^2\;\nabla (\nabla\cdot V(x,t)),\\
&& V(x,0)=0,\quad \partial_t V(x,0)=f(x).
\enn
Further, we can continue $V$ onto $\R^3\times (-\infty, 0)$ preserving the Lam\'e equation and the initial conditions.
Since \textcolor{rot}{$g(t)=0$} for $t<0$, the function $t\rightarrow U(t,x)$, given by (\ref{Duhalme}), can be regarded as the convolution of $V(x,\cdot)\chi(\cdot)$ and $g(\cdot)\chi(\cdot)$, i.e.,
\be\label{eq:14}
U(x,t)=[V(x,t)\chi(t)]\ast [g(t)\chi(t)],
\en
 where $\chi$ is the characteristic function of $(0,\infty)$.
 By Lemma \ref{lem:1}, $U(x,t)=0$ for $|x|=R$ and $t\in \R$. Taking the Fourier transform to (\ref{eq:14}), we see
 \ben
 0=\mathcal{F}_{t\rightarrow \omega}[V(x,t)\chi(t)]\mathcal{F}_{t\rightarrow \omega}[g(t)\chi(t)]
 =\mathcal{F}_{t\rightarrow \omega}[V(x,t)\chi(t)]\,\hat{g}(\omega),\quad |x|=R.
 \enn
  Making use of the analyticity of $\mathcal{F}_{t\rightarrow \omega}[V(x,t)\chi(t)]$ with respect to $\omega$ and that  the fact that $g$ does not vanish identically, we deduce that
  $V(x,t)=0$ for $|x|=R$ and $t\in \R$.

We decouple $V$ into the sum of the compressional part $V_p$ and shear part $V_s$:
\ben
V=V_p+V_s,\quad V_p=\nabla v_p+\nabla\times v_s,\quad \nabla\cdot v_s=0\quad \mbox{in}\quad \R^3,
\enn
where $V_\alpha$ ($\alpha=p,s$) fulfills the homogeneous wave equation
\ben
\partial_{tt}V_\alpha(x,t)=c_\alpha^2\,\Delta V_\alpha\qquad\mbox{in}\quad B_R
\enn
and the initial conditions
\ben
V_\alpha(x,0)=0,\quad \partial_t V_p (x,0)=\nabla f_p(x),\quad
\partial_t V_s (x,0)=\nabla\times f_s(x).
\enn
Since $\mbox{Supp}(f)\subset B_{R_0}\subset B_R$, $V(x,t)$ has zero initial conditions in the unbounded domain $|x|>R$.
Consequently, we get $V\equiv 0$ for all $|x|>R$ and $t\in \R$, due to
the unique solvability of the hyperbolic system in $|x|>R$ with the Dirichlet boundary condition at $|x|=R$ for all $t>0$.
By uniqueness of the Helmholtz decomposition, it follows that  $V_\alpha=0$ in $|x|>R$ for all $t\in \R$.
In view of the unique continuation for the homogeneous wave equation (see e.g., \cite{ACY,RZ,Tataru}),
it can be deduced that $V_\alpha(x,t)=0$ in $B_R\times\R$, implying that $V=0$ for $x\in B_R$ and $t\in \R$.
In particular, $\partial_tV(x,0)=f(x)=0$ for $x\in B_R$.

(ii) If $U_{\alpha}(x,t)=0$ for $|x|=R$, $t\in(0,T_\alpha)$, then we have $V_\alpha =0$ on $\{|x|=R\}\times \R$.
Repeating the arguments above, it follows that $V_\alpha(x,0)=0$ in $|x|>R$ for $t\in \R$.
As a consequence of the unique continuation we get $V_\alpha(x,0)=0$ in $B_R\times \R$. Setting $t=0$ we obtain  $f_\alpha=0$ for $\alpha=p,s$.
\hfill $\Box$

\begin{remark} We think that the frequency-domain and time-domain approaches presented above could also yield stability estimate of the spatial function in terms of the time-domain data $\{U(x,t): |x|=R, 0<t<T_s\}$.
 The terminal time $T_\alpha$ ($\alpha=p,s$) in Theorem \ref{Theorem-1} are optimal. Non-uniqueness examples can be readily reconstructed if the terminal time is less than $T_\alpha$.
\end{remark}

\section{Unique determination of temporal functions}\label{sec:temporal}

Given some $T>0$, we suppose that $g\in (L^2(0,T))^3$ is an unknown vector-valued temporal function and
that the spatial function $f$ is known to be compactly supported in $B_{R_0}$ for some $R_0>0$.
We consider the inverse problem of determining $g$ from observations of the solution of
\begin{equation}\label{eq}\left\{\begin{array}{ll}\rho\partial_{tt}U(x,t)=\nabla\cdot \sigma(x,t)+f(x)g(t),\quad & (x,t)\in\R^3\times(0,T),\\  U(x,0)=\partial_tU(x,0)=0,\quad &x\in\R^3,\end{array}\right.\end{equation}
at one fixed point $x_0\in\mbox{supp}(f)$ (i.e., interior observations) or at the subbounary $\Gamma\subset \partial B_R$
(i.e., partial boundary observations). In order to state rigorously our problem, we start by considering the regularity of \textcolor{rot}{this initial  value
problem} (\ref{eq}).
\begin{lemma}\label{ll1} Let $g\in (L^2(0,T))^3$ and let $f\in H^p(\R^3)$, with $p>5/2$ be supported on $B_R$ for some $R>R_0$. Then problem \eqref{eq} admits a unique solution $U\in \mathcal C([0,T];H^{p+1}(\R^3))^3\cap
 H^2((0,T);H^{p-1}(\R^3))^3$ satisfying
\begin{equation}\label{l1aa} \|U\|_{\mathcal C([0,T];H^{p+1}(\R^3))^3}+\|U\|_{H^2((0,T);H^{p-1}(\R^3))^3}\leq C\|g\|_{L^2(0,T)^3}\|f\|_{H^p(\R^3)},\end{equation}
with $C>0$  depending on $\rho$, $\lambda$, $\mu$, $R$.
\end{lemma}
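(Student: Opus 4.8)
The plan is to treat \eqref{eq} as an inhomogeneous second-order hyperbolic system with zero initial data and right-hand side $f(x)g(t)$, and to derive the stated regularity by combining the standard $L^2$-based energy theory for the Lamé system with the observation that spatial regularity of the source is inherited by the solution because the Lamé operator commutes with derivatives in $x$ (the coefficients $\rho,\lambda,\mu$ are constant). First I would recall that $\mathcal L_{\lambda,\mu}$ generates a strongly continuous group on $(H^{s+1}(\R^3))^3\times(H^s(\R^3))^3$ for every $s$, with norm bounds depending only on $\rho,\lambda,\mu$; equivalently, one passes to the Fourier variable $\xi$ and checks that the symbol matrix $-\rho^{-1}(\mu|\xi|^2 I+(\lambda+\mu)\xi\xi^\top)$ is symmetric negative definite with eigenvalues $-c_s^2|\xi|^2$ (multiplicity two) and $-c_p^2|\xi|^2$, so that the propagator is an explicit bounded matrix multiplier $\cos(t|\xi|\,\mathrm{diag}(c_\bullet))$ and $\sin(\cdot)/(\cdot)$ type operator. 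Then Duhamel's formula gives
\[
U(\cdot,t)=\int_0^t \mathcal S(t-\tau)\,\big(\rho^{-1}f\big)\,g(\tau)\,d\tau,
\]
where $\mathcal S(t):H^{p}(\R^3)^3\to H^{p+1}(\R^3)^3$ is the "sine" solution operator, bounded uniformly for $|t|\le T$ with norm $\le C(1+T)$.

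The core estimate is then obtained by putting the $H^{p+1}$ norm inside the integral and using Cauchy–Schwarz in $\tau$:
\[
\|U(\cdot,t)\|_{H^{p+1}}\le \int_0^t \|\mathcal S(t-\tau)(\rho^{-1}f)\|_{H^{p+1}}\,|g(\tau)|\,d\tau\le C\|f\|_{H^{p}}\,T^{1/2}\|g\|_{L^2(0,T)^3}.
\]
Continuity in $t$ with values in $H^{p+1}$ follows from continuity of $t\mapsto\mathcal S(t)$ in the strong operator topology together with dominated convergence, giving $U\in \mathcal C([0,T];H^{p+1})^3$. For the time-derivative statement, differentiating Duhamel twice produces $\partial_{tt}U=\rho^{-1}f\,g(t)+\int_0^t \mathcal S''(t-\tau)(\rho^{-1}f)g(\tau)\,d\tau$; since $\mathcal S''(t)=\mathcal L_{\lambda,\mu}\mathcal S(t)$ loses two spatial derivatives (mapping $H^p\to H^{p-1}$), and since the pointwise term $f\,g(t)$ lies in $H^p\subset H^{p-1}$ with $L^2$-in-time norm $\|f\|_{H^p}\|g\|_{L^2}$, one gets $\partial_{tt}U\in L^2((0,T);H^{p-1})^3$ with the stated bound; the intermediate statements $\partial_t U\in \mathcal C([0,T];H^p)$ and $U\in H^2((0,T);H^{p-1})$ then follow. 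The hypothesis $p>5/2$ is not actually needed for this Hilbert-space argument — it is presumably imposed so that $H^{p+1}\hookrightarrow \mathcal C^2$, making the pointwise evaluation at $x_0$ and the traction/boundary traces in the subsequent sections meaningful; I would simply note that the estimate \eqref{l1aa} holds for all real $p$ and that $p>5/2$ is retained for later use.

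Uniqueness is immediate: the difference of two solutions solves the homogeneous Lamé system with zero initial data, hence vanishes by the energy identity (multiply by $\partial_t U$ and integrate). The one genuinely delicate point is justifying that the Duhamel representation actually produces a solution in the claimed class rather than merely a weak/distributional solution — i.e. that the twice-differentiated series converges in $\mathcal C([0,T];H^{p-1})$ and that the initial conditions are attained; this is where one must be slightly careful, and I would handle it by first establishing everything for smooth compactly supported $g$ (where $\partial_{tt}U$ is classical) and then passing to the limit in $g\in L^2(0,T)^3$ using \eqref{l1aa} itself, which shows the solution map is bounded and therefore extends continuously. The main obstacle, such as it is, is thus bookkeeping: keeping track of exactly how many spatial derivatives each application of $\mathcal S$, $\mathcal S'$, $\mathcal S''$ costs, and ensuring the constant depends only on $\rho,\lambda,\mu,R$ (the dependence on $R$ entering only if one prefers to phrase the propagator estimates on $B_R$ via finite speed of propagation rather than on all of $\R^3$).
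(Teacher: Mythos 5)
Your proposal is correct and follows essentially the same route as the paper's proof: Fourier transform in the spatial variable, the explicit sine-type propagator attached to the symbol $A(\xi)=\rho^{-1}\left(\mu|\xi|^2 \mathbf{I}+(\lambda+\mu)\,\xi\otimes\xi\right)$, Duhamel's formula, Cauchy--Schwarz in time, and the count that each time derivative of the propagator costs one spatial derivative (so $\partial_{tt}U\in L^2((0,T);H^{p-1})$ while $U\in\mathcal C([0,T];H^{p+1})$), with $p>5/2$ indeed only needed later for the Sobolev embedding. The one cosmetic difference is at low frequencies: you control the singularity of $A^{-1/2}(\xi)$ through the bound $|A^{-1/2}(\xi)\sin(A^{1/2}(\xi)t)|\le t$, making your constant depend on $T$ rather than $R$, whereas the paper keeps the $|\xi|^{-1}$ bound and instead uses the compact support of $f$ (so $\|\hat f\|_{L^\infty}\le C_R\|f\|_{L^2}$, together with $|\xi|^{-2}\in L^1(B_1)$), which is precisely where the stated dependence of $C$ on $R$ originates.
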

\begin{proof}

Applying Fourier transform to $U(\cdot, t)$ with respect to spatial variables, denoted by $\hat{U}$, we find
\be\label{eq:4}\begin{aligned}
&\partial_{tt} \hat{U}(\xi,t)+A(\xi)\hat{U}(\xi,t)=\frac{g(t)\hat{f}(\xi)}{\rho}\quad\mbox{in}\quad \R^3\times(0,T),\\
&\hat{U}(\xi,0)=0,\quad \partial_t \hat{U}(\xi,0)=0,\quad \xi\in \R^3,
\end{aligned}
\en where the matrix $A(\xi)\in \R^{3\times 3}$ is defined by
\ben
A(\xi):=\frac{\mu}{\rho} |\xi|^2\,\textbf{I}+\frac{(\lambda+\mu)}{\rho}\xi\otimes \xi, \quad \xi=(\xi_1,\xi_2,\xi_3)\in \R^3.
\enn Evidently, $A(\xi)$ is a real-valued symmetric matrix, with the eigenvalues given by  $\frac{(\lambda+2\mu)|\xi|^2}{\rho}$, $\frac{\mu|\xi|^2}{\rho}$, $\frac{\mu|\xi|^2}{\rho}$; see Lemma \ref{Lem:eigenvalue} in the Appendix.
Denote by $A^{1/2}(\xi)$ the square roof of $A(\xi)$ and by  $A^{-1/2}(\xi)$ the inverse of $A^{1/2}(\xi)$. Then the unique solution to (\ref{eq:4}) takes the form
\begin{equation}\label{Fou}
\hat{U}^T(\xi,t)=\int_0^t g^T(s)\, A^{-1/2}(\xi)\,\sin \left( A^{1/2}(\xi) (t-s) \right)\,\frac{\hat{f}(\xi)}{\rho}\,ds.
\end{equation}
On the other hand,   for all $t\in[0,T]$ and $s\in[0,t]$, fixing
$$H(t-s,\cdot):=\xi\mapsto A^{-1/2}(\xi)\,\sin \left( A^{1/2}(\xi) (t-s) \right)\,\frac{\hat{f}(\xi)}{\rho},$$
we have
\begin{equation}\label{l1a}\begin{aligned}\|H(t-s,\cdot)\|_{L^2(\R^3)^{ 3\times3}}^2&\leq \mu^{-1}\rho^{-1}\textcolor{rot}{\|\hat{f}\|_{L^\infty(\R^3)}^2}\int_{B_1}|\xi|^{-2}d\xi+4\mu^{-1}\rho^{-1}\int_{\R^3\setminus B_1}(1+|\xi|^2)^{-1}|\hat{f}(\xi)|^2d\xi\\
\ &\leq C\mu^{-1}\rho^{-1}\textcolor{rot}{|B_R|\|\hat{f}\|_{L^2(\R^3)}^2}+4\mu^{-1}\rho^{-1}\|f\|_{L^2(\R^3)}^2,\end{aligned}\end{equation}
with $C$ a constant. Note that here we use the fact that $\xi\mapsto |\xi|^{-2}\in L^1(B_1)$, since $2<3$,  and the fact that supp$(f)\subset B_R$. Moreover, we apply the fact that $\lambda+\mu>0$ to deduce that $|A^{-1/2}(\xi)|\leq \rho^{\frac{1}{2}}\mu^{-1/2}|\xi|^{-1}$. In the same way, we have
\begin{equation}\label{l1b}\||\xi|^{p+1}H(t-s,\cdot)\|_{L^2(\R^3)^{ 3\times3}}^2\leq \mu^{-1}\rho^{-1}\int_{\R^3}|\xi|^{2p}|\hat{f}(\xi)|^2d\xi\leq \mu^{-1}\rho^{-1}\textcolor{rot}{\|f\|_{H^p(\R^3)}^2}.\end{equation}
Combining estimates \eqref{l1a}-\eqref{l1b}, one can easily deduce that $U\in \mathcal C([0,T]; H^{p+1}(\R^3))^3$. In the same way, we have
\begin{equation}\label{l1c}\|(1+|\xi|^2)^{\frac{p-1}{2}}\partial_tH(t-s,\cdot)\|_{L^2(\R^3)^{3\times3}}
+\|(1+|\xi|^2)^{\frac{p-1}{2}}\partial_t^2H(t-s,\cdot)\|_{L^2(\R^3)^{ 3\times3}}\leq C\|f\|_{H^p(\R^3)},\end{equation}
where $C$ depends on $\rho$, $\lambda$, $\mu$, $R$.
Moreover, for almost every $\xi\in\R^3$, we have
$$\hat{U}^T(\xi,\cdot):t\mapsto\hat{U}^T(\xi,t)\in H^2(0,T),$$
with
$$\partial_t\hat{U}^T(\xi,t)=\int_0^tg(s)^T\partial_tH(t-s,\xi)ds,\quad
\partial_{tt}\hat{U}^T(\xi,t)=\frac{g(t)^T\hat{f}(\xi)}{\rho}+\int_0^tg(s)^T\partial_{tt}H(t-s,\xi)ds.$$
Combining this with \eqref{l1c},  we deduce that
$U\in H^2((0,T); H^{p-1}(\R^3))^3$ and we deduce \eqref{l1aa} from the previous estimates.

\end{proof}

 According to Lemma \ref{ll1} and the Sobolev embedding theorem we have
 $U\in \mathcal C([0,T];\mathcal C^2(\R^3))^3\cap H^2((0,T); \mathcal C(\R^3))^3$ and the trace $t\mapsto U(x_0,t)$, for some point $x_0\in \R^3$, is well defined as an element of
 $H^2((0,T))^3$.
Below we consider the inverse problem of determining the evolution function $g(t)$ from the interior observation of the wave fields
$U(x_0, t)$ for $t\in(0,T)$ and some $x_0\in\mbox{supp}(f)$.

\begin{theorem}[Uniqueness and stability with interior data]\label{Th:in}
Let $x_0\in B_R$, $p>5/2$ and consider $M,\delta>0$ such that
$$ \mathcal A_{x_0,p,\delta,M}:=\{h\in H^p(\R^3):\ \|h\|_{H^p(\R^3)}\leq M,\ |h(x_0)|\geq\delta,\ \textrm{supp}(h)\subset B_R\}\neq\emptyset.$$
Then,
for $f\in \mathcal A_{x_0,p,\delta,M}$, it holds that
$$\|g\|_{L^2(0,T)^3}\leq C\, \|\partial_{tt}U(x_0,\cdot)\|_{L^2(0,T)^3}$$
where $C$ depends on $\lambda$, $\mu$, $\rho$, $p$, $x_0$, $M$, $R$, $\delta$ and $T$.
In particular, this estimate implies that the data $\{U(x_0,t): t\in(0,T)\}$  determines uniquely the temporal function $g$.
\end{theorem}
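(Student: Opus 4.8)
The plan is to convert the problem into a linear Volterra integral equation of the second kind for the vector-valued function $g$ and then to invert it by a Gronwall/resolvent argument. The starting point is the explicit representation \eqref{Fou} obtained in the proof of Lemma~\ref{ll1}, together with the identity derived there by differentiating \eqref{Fou} twice in time,
\begin{align*}
\partial_{tt}\hat U^T(\xi,t)&=\frac{g(t)^T\hat f(\xi)}{\rho}+\int_0^t g(s)^T\,\partial_{tt}H(t-s,\xi)\,ds,\\
\partial_{tt}H(\tau,\xi)&=-A^{1/2}(\xi)\,\sin\!\big(A^{1/2}(\xi)\tau\big)\,\frac{\hat f(\xi)}{\rho},
\end{align*}
where I used $A(\xi)A^{-1/2}(\xi)=A^{1/2}(\xi)$. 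First I would take the inverse spatial Fourier transform and evaluate at $x=x_0$. By Lemma~\ref{Lem:eigenvalue} the eigenvalues of the real symmetric matrix $A(\xi)$ are comparable to $|\xi|^2$ with constants depending only on $\lambda,\mu,\rho$, hence $\|A^{1/2}(\xi)\|\le C|\xi|$ and $\|\sin(A^{1/2}(\xi)\tau)\|\le 1$, so that $\|\partial_{tt}H(\tau,\xi)\|\le C|\xi|\,|\hat f(\xi)|$ uniformly in $\tau$. The hypothesis $p>5/2$ is precisely what makes $\xi\mapsto|\xi|\,|\hat f(\xi)|$ integrable on $\R^3$ (because $\int_{\R^3}(1+|\xi|^2)^{-p}|\xi|^2\,d\xi<\infty$ iff $2p-2>3$), so the matrix kernel
\[
N(\tau):=(2\pi)^{-3}\int_{\R^3}\partial_{tt}H(\tau,\xi)\,e^{-ix_0\cdot\xi}\,d\xi
\]
is well defined, continuous on $[0,T]$ by dominated convergence, and satisfies $\|N\|_{L^\infty(0,T)}\le C\|f\|_{H^p(\R^3)}\le CM$.

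Performing the inverse transform term by term --- legitimate because the $\xi$-integrands are dominated by fixed $L^1(\R^3)$ functions uniformly in $t\in[0,T]$, because $\hat f\in L^1(\R^3)$ by $p>5/2$, and because by Lemma~\ref{ll1} and Sobolev embedding $\partial_{tt}U(x_0,\cdot)$ is a well-defined element of $L^2(0,T)^3$ --- I obtain
\[
\partial_{tt}U(x_0,t)^T=\frac{f(x_0)}{\rho}\,g(t)^T+\int_0^t g(s)^T\,N(t-s)\,ds,\qquad t\in(0,T).
\]
Membership of $f$ in $\mathcal A_{x_0,p,\delta,M}$ guarantees $|f(x_0)|\ge\delta>0$ (note $f\in H^p(\R^3)\hookrightarrow\mathcal C(\R^3)$), so dividing by $f(x_0)/\rho$ turns this into a second-kind Volterra equation $g=\psi-\tfrac{\rho}{f(x_0)}\,N^\top\!*g$ with $\psi:=\tfrac{\rho}{f(x_0)}\,\partial_{tt}U(x_0,\cdot)$ (convolution taken componentwise on $(0,T)$).

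To finish, I would use the pointwise bound $|g(t)|\le|\psi(t)|+\tfrac{\rho\|N\|_{L^\infty(0,T)}}{\delta}\int_0^t|g(s)|\,ds$ and Gronwall's inequality; equivalently, the solution is $g=\psi+\mathcal R*\psi$ with $\mathcal R$ the resolvent kernel of $-\tfrac{\rho}{f(x_0)}N$, which is continuous on $[0,T]$, whence Young's convolution inequality gives
\[
\|g\|_{L^2(0,T)^3}\le\big(1+\|\mathcal R\|_{L^1(0,T)}\big)\|\psi\|_{L^2(0,T)^3}\le C\,\|\partial_{tt}U(x_0,\cdot)\|_{L^2(0,T)^3},
\]
with $C$ depending on $\lambda,\mu,\rho,p,x_0,M,R,\delta,T$ through $\|N\|_{L^\infty(0,T)}$, $\delta$ and $T$. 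The uniqueness assertion is then immediate from linearity of \eqref{eq} in $g$: two temporal functions producing the same interior data $U(x_0,\cdot)$ differ by a source whose wave field has vanishing second time derivative at $x_0$, and the estimate forces that difference to be zero.

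The only genuinely delicate step is the justification of the termwise inverse spatial Fourier transform, i.e.\ the existence and uniform boundedness of the kernel $N(\tau)$; this is exactly the role of the smoothness threshold $p>5/2$, which renders $N(\tau)$ an absolutely convergent integral uniformly in $\tau\in[0,T]$. The rest --- the eigenvalue bounds of Lemma~\ref{Lem:eigenvalue}, the estimate $\|\sin(A^{1/2}(\xi)\tau)\|\le1$ for a real symmetric argument, and the Volterra inversion by Gronwall --- is routine.
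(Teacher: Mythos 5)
Your proposal is correct and takes essentially the same route as the paper: it rests on the representation \eqref{Fou} from Lemma \ref{ll1}, the eigenvalue bound $\|A^{1/2}(\xi)\|\le \sqrt{(\lambda+2\mu)/\rho}\,|\xi|$ from Lemma \ref{Lem:eigenvalue} combined with $p>5/2$ to get an absolutely convergent $\xi$-integral, the lower bound $|f(x_0)|\ge\delta$, and the Gronwall inequality of Lemma \ref{Gronwall}. The only cosmetic difference is that the paper expresses your convolution term $\int_0^t g(s)^T N(t-s)\,ds$ as $-\rho^{-1}\mathcal{L}_{\lambda,\mu}U(x_0,t)^T$ through the governing equation and bounds it by $C\int_0^t|g(s)|\,ds$, which is the same computation as your bound on the kernel $N$.
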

\begin{proof}
According to \eqref{Fou}, the solution $U$ of \eqref{eq} is given by
\ben U(x,t)^T=(2\pi)^{-3}\int_{\R^3}\left(\int_0^t g(s)^T\, A^{-1/2}(\xi)\,\sin \left( A^{1/2}(\xi) (t-s) \right)\,\frac{\hat{f}(\xi)}{\rho}\right)\,e^{i\xi\cdot x}\,d\xi,\quad (x,t)\in\R^3\times[0,T] \enn
and applying Fubini's theorem we find
\ben
U(x,t)^T=(2\pi)^{-3}\int_0^t g(s)^T\,\left(\int_{\R^3} A^{-1/2}(\xi)\,\sin \left( A^{1/2}(\xi) (t-s) \right)\,\frac{\hat{f}(\xi)}{\rho}\,e^{i\xi\cdot x}\,d\xi\right)\,ds,\quad (x,t)\in\R^3\times[0,T].
\enn
In particular, in view of Lemma \ref{ll1}, $U\in \mathcal C([0,T];H^{p+1}(\R^3))^3\cap H^2((0,T); H^{p-1}(\R^3))^3$ satisfies \eqref{l1aa}.
Further, direct calculations show that
\ben
 \mathcal{L}_{\lambda,\mu}U(x,t)^T=-(2\pi)^{-3}\int_0^t g(s)^T\,\left(\int_{\R^3} A^{1/2}(\xi)\,\sin \left( A^{1/2}(\xi) (t-s) \right)\,\frac{\hat{f}(\xi)}{\rho}\,e^{i\xi\cdot x}\,d\xi\right)\,ds.
\enn
Since $|A^{1/2}(\xi)\textbf{a}|\leq \frac{\sqrt{\lambda+2\mu}}{\sqrt{\rho}}|\xi| |\textbf{a}|$ for all $\textbf{a}\in \C^3$,
the previous identity can be estimated by
\be\nonumber
|\mathcal{L}_{\lambda,\mu}U(x,t)|&\leq& \frac{\sqrt{\lambda+2\mu}}{\sqrt{\rho}} \int_0^t |g(s)|\,ds\int_{\R^3} |\hat{f}(\xi)|\,|\xi|\,d\xi \\ \nonumber
&\leq& \frac{\sqrt{\lambda+2\mu}}{\sqrt{\rho}} \int_0^t |g(s)|\,ds \,
||\hat{f}(\xi)(1+|\xi|^2)^{p/2}||_{L^2(\R^3)}\, ||(1+|\xi|^2)^{(1-p)/2}||_{L^2(\R^3)}\\
\label{eq:5}
&\leq& M_0\,\frac{\sqrt{\lambda+2\mu}}{\sqrt{\rho}}\; ||f||_{H^p(\R^3)} \,\int_0^t |g(s)|\,ds
\en
where $M_0=||(1+|\xi|^2)^{(1-p)/2}||_{L^2(\R^3)}<\infty$.
Since $|f(x_0)|\geq \delta$, we derive from the governing equation of $U$ and (\ref{eq:5})  that
\ben
|g(t)|&=&\frac{1}{|f(x_0)|}\left| \rho \partial_{tt} U(x_0,t) -\mathcal{L}_{\lambda,\mu}U(x_0,t) \right|\\
&\leq& M_1\,|\partial_{tt} U(x_0,t)|+M_2\,\int_0^t |g(s)|\,ds
\enn for all $t\in(0,T)$, where $M_1=\rho/\delta$,
 $M_2=M_0\frac{\sqrt{\lambda+2\mu}}{\sqrt{\rho}}\,M /\delta$.
 Applying the Grownwall inequality stated in Lemma \ref{Gronwall}, for almost every $t\in(0,T)$, we find
$$
\begin{aligned}|g(t)| &\leq  M_1|\partial_{tt} U(x_0,t)|+\,M_1\,M_2\,\int_0^t|\partial_{tt} U(x_0,s)|\,e^{M_2(t-s)}\,ds\\
\ &\leq M_1\,|\partial_{tt} U(x_0,t)|\,+M_1\,M_2\,T^{\frac{1}{2}}\,e^{M_2T}\, \|\partial_{tt} U(x_0,\cdot)\|_{L^2(0,T)^3}.\end{aligned}$$
 Therefore, taking the norm $L^2(0,T)$ on both sides of the inequality, implies that
\ben
||g||_{L^2(0,T)^3}\leq (M_1+M_1\,M_2\,T\,e^{M_2T})\,\|\partial_{tt}U(x_0,\cdot)\|_{L^2(0,T)^3}.
\enn
This completes the proof.
\end{proof}

To state uniqueness with partial boundary measurement data, we need the concept of non-radiating source.
\begin{definition}\label{def}
 The compactly supported function $f$ is called a non-radiating source at the frequency $\omega\in \R^+$ to
 the Lam\'e system if the unique radiating solution to
 the inhomogeneous Lam\'e system
 \be\label{P}
 \mathcal{L}_{\lambda,\mu} u(x) +\omega^2\rho u(x)=f(x)\,P,\quad j=1,2,3,
 \en
 does not vanish identically in $\R^3\backslash \overline{\mbox{supp}(f)}$ for any $P\in \C^3$.
\end{definition}

\begin{theorem}[Uniqueness with partial boundary data]\label{TH:temporal} Suppose that $f\in L^2(B_R)$ is known to be a compacted supported function over
$B_{R_0}$ for some $R_0<R$  and that $f$ is not a non-radiating source for all $\omega\in \R^+$. Then the temporal function
 $g\in \mathcal{C}_0([0,T_0])^3$ can be uniquely determined by the partial boundary measurement data $\{U(x,t): x\in \Gamma, t\in(0,T_s)\}$
 where $\Gamma\subset \partial B_R$ is an arbitrary subboundary with positive Lebesgue measure and $T_s$ is defined in (\ref{Tps}).
\end{theorem}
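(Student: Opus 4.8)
The plan is to adapt the Fourier-transform argument behind Theorem \ref{Theorem-1}, now keeping $f$ fixed and recovering $g$. By linearity of \eqref{eq} in $g$, it suffices to show that if $U(x,t)=0$ for all $x\in\Gamma$ and $t\in(0,T_s)$, then $g\equiv 0$. First I would close up the data in time: since $\mathrm{supp}(g)\subset[0,T_0]$ we have $U(x,t)=0$ for $t\le 0$ (as noted after \eqref{U}), while Lemma \ref{lem:1} — whose argument applies equally on $\overline{B_R}$ — gives $U(x,t)=0$ for $x\in\Gamma$ and $t\ge T_s$. Hence, for each fixed $x\in\Gamma$, $t\mapsto U(x,t)$ vanishes outside $[0,T_s]$, the time Fourier transform is legitimate, and $\hat U(x,\omega)=0$ for all $x\in\Gamma$ and $\omega\in\R$.

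Next I would put the frequency-domain representation into usable form. Taking the time Fourier transform in \eqref{U} (now with $f$ scalar and $g$ a $\C^3$-valued function) yields, as in \eqref{U-hat},
\[
\hat U(x,\omega)=\mathbb W(x,\omega)\,\hat g(\omega),\qquad \mathbb W(x,\omega):=\int_{\R^3}\hat G(x-y,\omega)\,f(y)\,dy .
\]
From the defining equation of $\hat G$ and \eqref{Pi} one has $\mathbb W(\cdot,\omega)e_j=-u_{e_j}(\cdot,\omega)$, where $u_P$ denotes the unique Kupradze-radiating solution of \eqref{P}; hence by superposition $\hat U(\cdot,\omega)=-u_{\hat g(\omega)}(\cdot,\omega)$. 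Since $\mathrm{supp}(f)\subset B_{R_0}$ and $\mathcal L_{\lambda,\mu}+\omega^2\rho$ has constant coefficients, $\hat U(\cdot,\omega)$ is real-analytic on the connected set $\{\,|x|>R_0\,\}$.

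The core of the proof is to upgrade ``$\hat U(\cdot,\omega)=0$ on the proper subset $\Gamma$'' to ``$\hat U(\cdot,\omega)=0$ outside $\mathrm{supp}(f)$''. Because $R_0<R$, $\hat U(\cdot,\omega)$ is analytic in a full neighbourhood of the sphere $\partial B_R$, so its restriction to $\partial B_R$ is real-analytic on the connected manifold $\partial B_R$; vanishing on the positive-measure set $\Gamma$ then forces $\hat U(\cdot,\omega)=0$ on all of $\partial B_R$. As $\hat U(\cdot,\omega)$ solves the homogeneous Navier equation and satisfies the Kupradze radiation condition in $\{\,|x|>R\,\}$, uniqueness for the exterior Dirichlet problem — the fact already used in the proof of Theorem \ref{Theorem-1} — gives $\hat U(\cdot,\omega)\equiv 0$ in $\{\,|x|>R\,\}$, and then the identity theorem on the connected set $\{\,|x|>R_0\,\}$ propagates this to $\hat U(\cdot,\omega)\equiv 0$ there, hence to $\R^3\setminus\overline{\mathrm{supp}(f)}$. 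Thus $u_{\hat g(\omega)}(\cdot,\omega)$ vanishes identically outside $\mathrm{supp}(f)$ for every $\omega\in\R^+$, so the hypothesis that $f$ is not a non-radiating source at any $\omega\in\R^+$ forces $\hat g(\omega)=0$ for all $\omega>0$. Finally, $g$ being real-valued and compactly supported, $\hat g$ is determined by its restriction to $\R^+$ (by the symmetry $\hat g(-\omega)=\overline{\hat g(\omega)}$ and continuity, or by Paley--Wiener analyticity), so $\hat g\equiv 0$ and $g\equiv 0$ by Fourier inversion, which is the claimed uniqueness.

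The step I expect to be the main obstacle is precisely the passage from the partial data on $\Gamma$ to the full exterior: it rests on the strict inequality $R_0<R$, which makes $\hat U(\cdot,\omega)$ analytic across $\partial B_R$ so that a positive-measure trace condition propagates over the whole sphere, on the exterior Dirichlet uniqueness for the time-harmonic Navier system, and on the scalar-versus-vector bookkeeping identifying $\hat U(\cdot,\omega)$ with $-u_{\hat g(\omega)}$. Secondary points needing care are the justification of the time Fourier transform (via the compact time-support of $t\mapsto U(x,t)$ on $\Gamma$ granted by Lemma \ref{lem:1}) and keeping every analytic-continuation step inside a connected set on which the solution is genuinely real-analytic.
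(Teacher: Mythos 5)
Your proposal is correct, and it reaches the conclusion by a genuinely different route from the paper. The paper fixes a frequency $\omega_0$ and proves that the matrix $W(x,\omega_0)=\int\hat G(x-y,\omega_0)f(y)\,dy$ is invertible at some $x_0\in\Gamma$: it argues by contradiction that if $\mbox{Det}\,W(\cdot,\omega_0)$ vanished on all of $\Gamma$, a fixed combination $V=c_1w_1+c_2w_2+c_3w_3$ would vanish on $\Gamma$, hence (analytic continuation on $\partial B_R$, exterior Dirichlet uniqueness, unique continuation) outside $\mbox{supp}(f)$, contradicting the non-radiating hypothesis; it then reads off $\hat g(\omega_0)=[W(x_0,\omega_0)]^{-1}\hat U(x_0,\omega_0)$, which is the explicit inversion formula behind the indicators $I_1,I_2$ in Section \ref{subsect:temporal}. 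You instead propagate the vanishing of $\hat U(\cdot,\omega)$ itself: real-analyticity of $\hat U(\cdot,\omega)$ across $\partial B_R$ (since $R_0<R$) plus the fact that a nontrivial real-analytic function on the connected sphere cannot vanish on a set of positive surface measure gives $\hat U=0$ on all of $\partial B_R$, then exterior uniqueness and the identity theorem give $\hat U(\cdot,\omega)\equiv0$ outside the support of $f$, and the non-radiating hypothesis applied directly with $P=\hat g(\omega)$ forces $\hat g(\omega)=0$. Your route buys two things: it uses the positive-measure hypothesis on $\Gamma$ exactly as stated (the paper's step invokes analyticity of the surface $\Gamma$), and it sidesteps the delicate point in the paper where a single $x$-independent null vector $(c_1,c_2,c_3)$ is extracted from $\mbox{Det}\,W(x,\omega_0)=0$ on all of $\Gamma$ (a priori the kernel could vary with $x$). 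What it loses is the constructive formula $\hat g(\omega)=W^{-1}\hat U$, which the paper exploits numerically. Two minor points to tighten, both at the same level of rigor as the paper itself: the passage from vanishing on $\{|x|>R_0\}$ to vanishing on all of $\R^3\setminus\overline{\mbox{supp}(f)}$ needs a unique-continuation step (and tacitly a connectivity assumption on the complement of the support), and the final conclusion $g\equiv0$ from $\hat g|_{\R^+}=0$ should cite, as you do, the reality of $g$ or Paley--Wiener analyticity.
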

\begin{proof}
 Let $w_j=w_j(x,\omega)$ ($j=1,2,3$) be the unique radiating solution to
 the inhomogeneous Lam\'e system
 \ben
\mathcal{L}_{\lambda,\mu} w_j(x) +\omega^2\rho w_j(x)=f(x)\,e_j,\quad j=1,2,3,
 \enn
 which does not vanish identically in $|x|\geq R$ by our assumption. Set the matrix $W:=(w_1,w_2,w_3)\in \C^{3\times 3}$.
 Then $W(\cdot, \omega)$ solves the matrix equation
 \ben
\mathcal{L}_{\lambda,\mu} W(x,\omega) +\omega^2\rho W(x,\omega)=f(x)\,\textbf{I}\quad\mbox{in}\quad \R^3\times (0,\infty).
 \enn
 Note that here the action of the differential operator is understood column-wisely, and $W$ can be represented as
 \ben
 W(x,\omega)=\int_{\R^3} \hat{G}(x-y, \omega) f(y)\,dy,\quad x\in \R^3,
 \enn where $\hat{G}$ is the Green's tensor to the time-harmonic Lam\'e system. In view of (\ref{U-hat}),
 the Fourier transform $\hat{U}(x,\omega)$ of $U(x,t)$ can be written as
 \be\label{UW}
 \hat{U}(x,\omega)=W(x,\omega)\,\hat{g}(\omega)\quad \mbox{for all}\quad \omega\in \R^+,\;|x|=R.
 \en
 We claim that for each $\omega_0\in \R^+$, there always exists $x_0\in \G\subset \partial B_R$ such that
 $\mbox{Det}(W(x_0,\omega_0))\neq 0$. Suppose on the contrary that $\mbox{Det}(W(x,\omega_0))= 0$ for all
 $x\in \G$.
 This implies that there exist $c_j\in \C^3$ such that
 \be\label{V}
V(x):= c_1w_1(x,\omega_0)+c_2w_2(x,\omega_0)+c_3w_3(x,\omega_0)=0\quad\mbox{on}\quad \Gamma.
 \en
 By the analyticity of $w_j$ in a neighborhood of $|x|=R$ and the analyticy of the surface $\Gamma\subset \partial B_R$,
 we conclude that (\ref{V}) holds  on $|x|=R$. By uniqueness of the exterior Dirichlet boundary value problem, we have $V(x)=0$
  in $|x|>R$, and by unique continuation it holds that $V(x)=0$ for all $x$ lying outside of the support of $f$.
 However, it is easy to observe that $V$ satisfies the inhomogeneous equation (\ref{P}) with $P=c_1e_1+c_2e_2+c_3e_3$,
which contradicts the fact that $f$ is not a non-radiating source. Therefore, by (\ref{UW}) we get
\ben
\hat{g}(\omega_0)=[W(x_0,\omega_0)]^{-1} \,\hat{U}(x_0,\omega_0)\in \C^{3\times 1}\quad\mbox{for some}\quad x_0\in \Gamma.
\enn
Note that $\omega_0$ is arbitrary and the point $x_0$ depends on $\omega_0$.
Hence, if $U(x,t)=0$ for all  $x\in \Gamma$ and $t\in(0,T_s)$, then $\hat{U}(x,\omega)=0$ for all $x\in \G$ and
$\omega\in \R^+$. This implies that $\hat{g}(\omega)=0$ for all $\omega\in \R$ and thus $g\equiv 0$.
\end{proof}

\section{Numerical experiments}\label{sec:numerics}
\textcolor{rot2} {In this section, we propose a Landweber iterative method for reconstructing the spatial function $f$ in 2D and
a non-iterative inversion scheme based on the proof of
Theorem \ref{TH:temporal} for recovering the temporal function $g$ in 3D.
Several numerical examples will be illustrated to examine the
effectiveness of the proposed methods.}

\subsection{Reconstruction of spatial functions}\label{test:spatial}

We consider the inverse source problem presented in Section \ref{sec:spatial}.
Our aim is to reconstruct the spacial function in two dimensions,
relying  on the Landweber iterative method for solving linear algebraic equations.
Assume that the time-dependent data $U(x,t), x\in\partial B_{R}$ ($R>R_0$) is measured over the time interval $[0,T]$
where $T>0$ is sufficiently large such that the integral
\ben
\textcolor{rot1}{\int_0^T\, U(x,t)\exp(i\omega t)\,dt}
\enn
can be used to approximate the Fourier transform $\hat{U}(x,\omega)$ for any $\omega\in\R^+$.
In the time-harmonic regime, it is supposed that the multi-frequency data
 $\hat{U}(x,\omega_k), x\in\partial B_{R}$ for $k=1,\cdots,K$ are available. Hence, the time-dependent inverse source problem can be transformed to a problem in the Fourier domain with near-field data of multi frequencies.
   In 2D, the Helmhotz decomposition of $\hat{U}$ takes the form
  $\hat{U}=\hat{U}_p+\hat{U}_s$, where the compressional part $\hat{U}_p$ and shear part $\hat{U}_s$ are given by
\be
\label{decomposition}
\hat{U}_p=-\frac{1}{k_p^2}\,\mbox{grad}\,\mbox{div}\;\hat{U},\quad \hat{U}_s=\frac{1}{k_s^2}\,\overrightarrow{\mbox{curl}}\,\mbox{curl}\;\hat{U}.
\en
Here the two-dimensional operators \mbox{curl} and $\overrightarrow{\mbox{curl}}$ are defined respectively by
\ben
\mbox{curl}\,v=\partial_1 v_2-\partial_2 v_1,\quad v=(v_1,v_2)^\top,\qquad \overrightarrow{\mbox{curl}}\; h:=(\partial_2h, -\partial_1h)^\top.
\enn
 Writing $\hat{u}_p:=-1/k_p^2\;\mbox{div}\,\hat{U}$ and $\hat{u}_s=1/k_s^2\;\mbox{curl}\,\hat{U}$, we have
 $\hat{U}=\mbox{grad}\,\hat{u}_p+\overrightarrow{\mbox{curl}}\,\hat{u}_s$ and the scalar functions $\hat{u}_\alpha$ ($\alpha=p,s$) satisfy
 the Sommerfeld radiation condition
\ben
\lim_{r \to \infty} \sqrt{r}\left(\frac{\partial \hat{u}_\alpha}{\partial r}-ik_\alpha \hat{u}_\alpha\right) = 0,\quad r=|x|,\quad \alpha=p,s
\enn
uniformly with respect to all $\hat{x}=x/|x|\in\mathbb{S}^{1}$.

For $|x|\ge R$, the radiation solutions $\hat{u}_\alpha$ can be expressed in terms of Hankel functions of the first kind,
\be\label{eq:8}
\hat{u}_\alpha(|x|,\theta)=\sum_{n\in\Z}\,\hat{u}_{\alpha,n}\,H_n^{(1)}(k_\alpha|x|)\exp(in\theta),\quad x=|x|(\cos\theta,\sin\theta), \;|x|\ge R.
\en
For every fixed $\omega\in \R^+$, the coefficients $\hat{u}_{\alpha,n}\in \C$ are uniquely determined by $\hat{U}(x,\omega)|_{|x|=R}$ as follows (see e.g., \cite{BHSY})
\be\label{eq:9}
\begin{pmatrix}
\hat{u}_{p,n} \\
\hat{u}_{s,n}
\end{pmatrix}=\frac{1}{2\pi R}[A_n(R)]^{-1}\int_0^{2\pi}\,\begin{pmatrix}
\cos\theta & \sin\theta \\
-\sin\theta & \cos\theta
\end{pmatrix}\,\hat{U}(R,\theta;\omega)d\theta,
\en
where
\be\label{eq:10}
A_n(R)=\begin{pmatrix}
t_p {H_n^{(1)}}'(t_p) & in H_n^{(1)}(t_s)\\
in H_n^{(1)}(t_p) & -t_s {H_n^{(1)}}'(t_s)
\end{pmatrix},\quad t_\alpha=k_\alpha R,\; \alpha=p,s.
\en
This means that, in the Fourier domain, the $P$ and $S$-waves can be decoupled from
the whole wave field $\hat{U}$ on $|x|=R$ for every fixed frequency $\omega$.

Below we shall consider the inverse problems of reconstructing $f_p$, $f_s$ and $f$ from the wave fields $\hat{u}_p(x,\omega)|_{\partial B_{R}}$, $\hat{u}_s(x,\omega)|_{\partial B_{R}}$ and $u(x,\omega)|_{\partial B_{R}}$ at a finite number of frequencies $\omega=\omega_k$, $k=1,\cdots,K$, respectively.
Recall from (\ref{U-hat}) that
\be
\label{eq:6}
\hat{U}(x,\omega)/\hat{g}(\omega)=\int_{B_R}\,\hat{G}(x-y)\,f(y)dy, \quad |x|=R,\quad \hat{g}(\omega)\neq 0,
\en
where $\hat{G}$ is the fundamental displacement tensor of the Navier equation of the form (\ref{Pi}) with the fundamental solution of the two-dimensional Helmholtz equation given by
\ben
\Phi_k(x,y)=\frac{i}{4}H_0^{(1)}(k|x-y|),\quad x\ne y,\quad x,y\in \R^2.
\enn
Analogously, the compressional and shear components of $\hat{U}$ can be represented by (cf. (\ref{eq:3}))
\be\label{eq:7}
\hat{u}_\alpha(x,\omega)/\hat{g}(\omega)=\frac{1}{\gamma_\alpha}\int_{B_R}\,\Phi_{k_\alpha}(x,y)\,f_\alpha(y)dy,\quad \alpha=p,s.
\en
Our numerical scheme relies on solvability of the ill-posed integral equations (\ref{eq:6}) and (\ref{eq:7}) for finding $f$ and $f_\alpha$.
 Since $f(x)$ is real-valued, it is more convenient to consider real-valued integral equations from numerical point of view. Taking the real and imaginary parts of (\ref{eq:6}) gives
\be
\label{Full-real}
\mbox{Re}\{\hat{U}(x,\omega)/\hat{g}(\omega)\}&=&\int_{B_R}\, \mbox{Re}\{\hat{G}(x-y, \omega)\}\,f(y)dy, \quad |x|=R,\\
\label{Full-imag}
\mbox{Im}\{\hat{U}(x,\omega)/\hat{g}(\omega)\}&=&\int_{B_R}\, \mbox{Im}\{\hat{G}(x-y,\omega)\}\,f(y)dy, \quad |x|=R.
\en
Furthermore, for the pressure part $\hat{u}_p$ and shear part $\hat{u}_s$, we have
\be
\label{ps-real}
\mbox{Re}\{\hat{u}_\alpha(x,\omega)/\hat{g}(\omega)\}&=&\frac{1}{\gamma_\alpha}\int_{B_R}\, \mbox{Re}\{\Phi_{k_\alpha}(x,y)\}f_\alpha(y)dy, \quad |x|=R,\\
\label{ps-imag}
\mbox{Im}\{u_\alpha(x,\omega)/\hat{g}(\omega)\}&=&\frac{1}{\gamma_\alpha}\int_{B_R}\, \mbox{Im}\{\Phi_{k_\alpha}(x,y)\}f_\alpha(y)dy, \quad |x|=R.
\en
The equations (\ref{Full-real})-(\ref{ps-imag}) are
 Fredholm integral equations of the first kind. These equations are ill-posed,  since the singular values of the matrix resulting from the discretized integral kernel are rapidly decaying.
Now, we describe a Landweber iterative method to solve the ill-posed integral equations (\ref{Full-real})-(\ref{ps-imag}). Consider the linear operator equations
\be
\label{OperatorEqn}
V_k\, (S)=v_k,\quad k=1,\cdots,K,\quad S=f, f_p, f_s,
\en
where $v_k=\hat{U}(x,\omega_k)$ or $v_k=\hat{u}_\alpha(x,\omega_k)$ denotes
the measurement data at the frequency $\omega_k$. We denote by
$S_{l,k}$ the inverse solution obtained at the $l$-th iteration step reconstructed from the data set at the frequency $\omega_k$.
\textcolor{rot1}{Due to the linearity of (\ref{OperatorEqn}), a straightforward Landweber iteration (see, e.g.,\cite{BLLT}) can be
applied as a regularization scheme for solving (\ref{OperatorEqn}).
For clarity We summarize the inversion process in Table \ref{table}}.

\begin{table}\caption{Landweber iterative method for reconstructing spatial functions.}\label{table}
\begin{tabularx}{\textwidth }{>{\bfseries}lX}
\toprule
Step 1 & Set an initial guess $S_{0,0}$  \\ \midrule
Step 2 & Update the source function $S$ by the iterative formula
    \ben
    S_{l,k}=S_{l-1,k}+\epsilon V_k^*\,(v_k-V_k\,S_{l-1,k}), \quad l=1,\cdots,L,
    \enn
    where $\epsilon$ and $L$ are the step length and total number of iterations, respectively. \\ \midrule
Step 3 & Set $S_{0,k+1}=S_{L,k}$ and repeat Step 2 until the highest frequency $\omega_K$ is reached.\\
\bottomrule
\end{tabularx}
\end{table}

\begin{figure}[htbp]
\centering
\begin{tabular}{cc}
\includegraphics[scale=0.3]{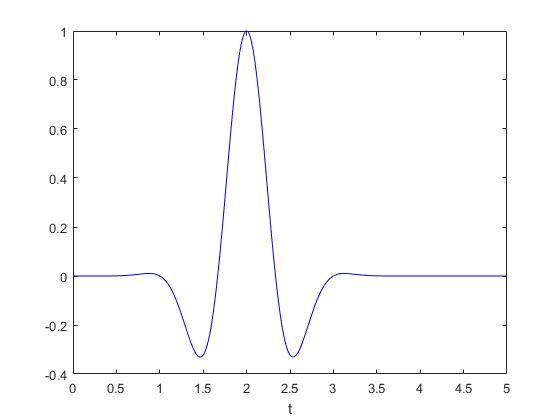} &
\includegraphics[scale=0.3]{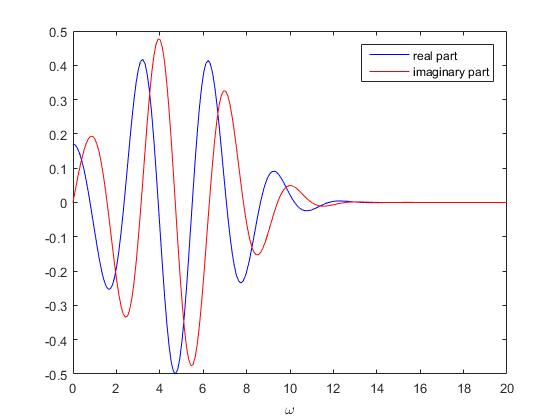} \\
(a) $g(t)$ & (b) $\hat{g}(\omega)$  \\
\end{tabular}
\caption{\textcolor{rot1}{The exact pulse function $g(t)$ and its Fourier transformation $\hat{g}(\omega)$.}}
\label{pulse}
\end{figure}

\begin{figure}[htbp]
\centering
\begin{tabular}{cc}
\includegraphics[scale=0.3]{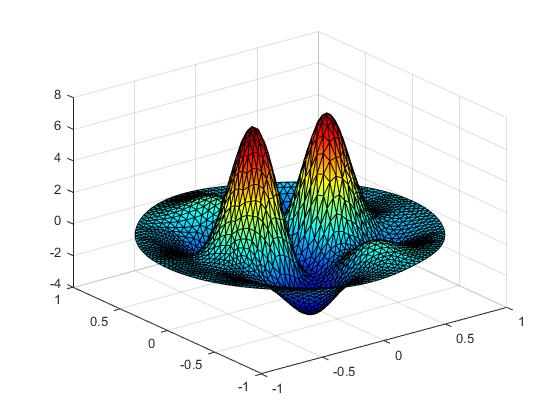} &
\includegraphics[scale=0.3]{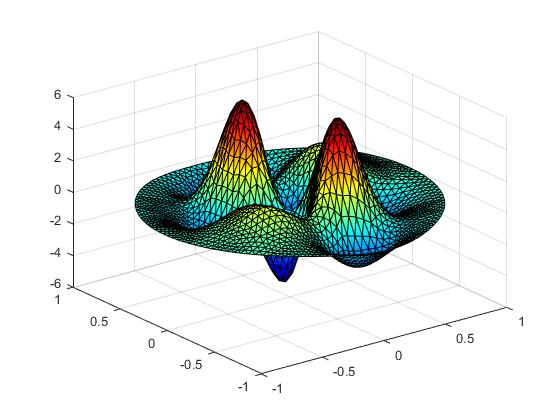} \\
(a) $f_1$ & (b) $f_2$ \\
\includegraphics[scale=0.3]{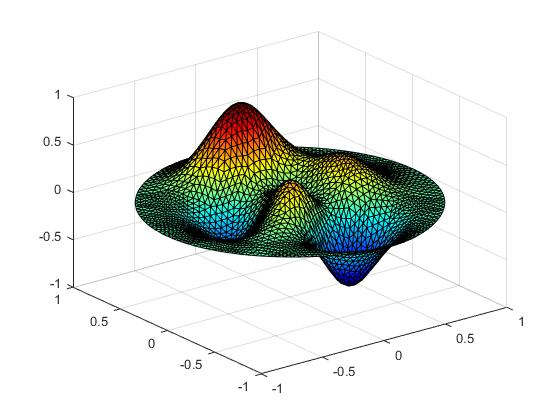} &
\includegraphics[scale=0.3]{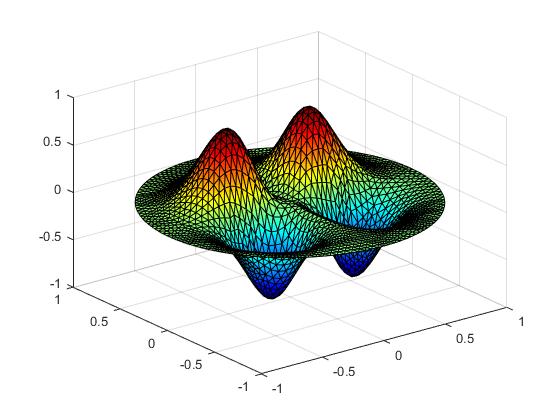} \\
(c) $f_p$ & (d) $f_s$ \\
\end{tabular}
\caption{The exact spatial source function $f=(f_1,f_2)$ and its compressional component $f_p$ and shear component $f_s$.}
\label{exactsource}
\end{figure}

Below we present several numerical examples to demonstrate the validity and effectiveness of the proposed method. In the following we always choose
\ben
\hat{g}(\omega)=\int_0^T\,g(t)\exp(i\omega t)dt,\quad g(t)=\begin{cases}
\cos(1.5\pi(t-t_0))\exp(-\pi(t-t_0)^2),& t\le T, \cr
0,& t>T,
\end{cases}
\enn
where $T=5$, $t_0=2$. The functions $\hat{g}$ and $g$ are plotted in Figure \ref{pulse}, which shows that $\hat{g}$ is nonzero in $(0,20)$. The source function $f$ in $B_R$ with $R=1$ is defined by
\ben
f=(f_1,f_2)^\top=\nabla\,f_p+\overrightarrow{\mbox{curl}}\,f_s,
\enn
where
\ben
f_p(x)&=&0.3(1-3x_1)^2\exp(-9x_1^2-(3x_2+1)^2) -(0.6x_1-27x_1^3-3^5x_2^5)\exp(-9x_1^2-9x_2^2)\\
&-&0.03\exp(-(3x_1+1)^2-9x_2^2),\\
f_s(x)&=&135x_1^2x_2\exp(-9x_1^2-9x_2^2);
\enn
see Figure \ref{exactsource}. We choose $\mu=1$, $\lambda=2$, $\rho=1$ and $R=2$. The scattering data is collected at 64 uniformly distributed points
on the circle $\partial B_{R}$. The total number of iterations is set to be $L=10$.

In the static case, we simulate the data $\hat{U}(x,\omega)$ by solving the inhomogeneous time-harmonic Navier equation using finite element method coupled with an exact transparent boundary condition. Then the compressional and shear parts, $\hat{u}_p$ and $\hat{u}_s$, are decoupled from $\hat{U}(x,\omega)$ via (\ref{eq:8})-(\ref{eq:10}). The near-field data of
twenty equally spaced frequencies from 1 to 20 are calculated. Figure \ref{TH-1} shows the reconstructed $S_1$ and $S_2$ from $\{\hat{U}(x,\omega_k): |x|=R, k=1,2,\cdots, 20\}$, while
 Figure \ref{TH-2} presents the reconstructed $f_p$ and $f_s$ from the counterpart of compressional and shear waves, respectively.

\begin{figure}[htbp]
\centering
\begin{tabular}{cc}
\includegraphics[scale=0.3]{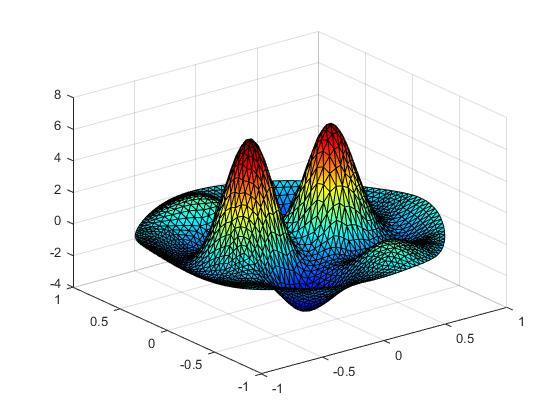} &
\includegraphics[scale=0.3]{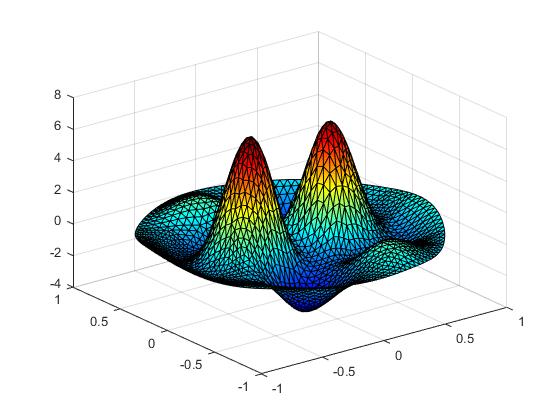} \\
(b) Reconstructed $f_1$ & (c) Reconstructed $f_1$ \\
\includegraphics[scale=0.3]{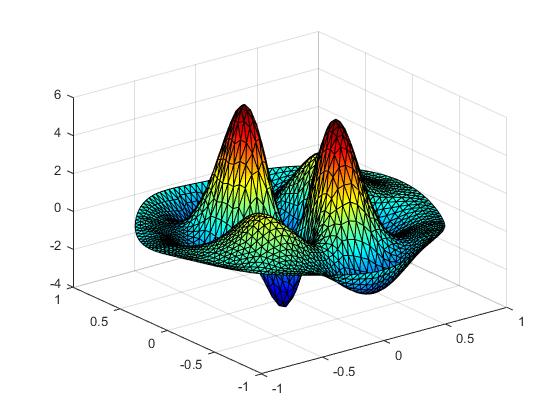} &
\includegraphics[scale=0.3]{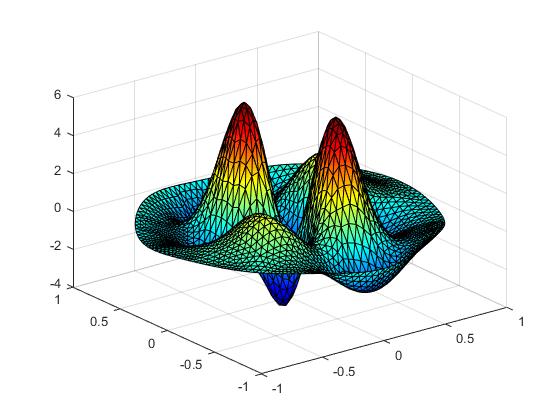} \\
(e) Reconstructed $f_2$ & (f) Reconstructed $f_2$
\end{tabular}
\caption{Reconstructions of $f=(f_1,f_2)$ from time-harmonic data at multi frequencies. Figures (b) and (e) are reconstructed from (\ref{Full-real}), whereas (c),(f) are from (\ref{Full-imag}).}
\label{TH-1}
\end{figure}

\begin{figure}[htbp]
\centering
\begin{tabular}{cc}
\includegraphics[scale=0.3]{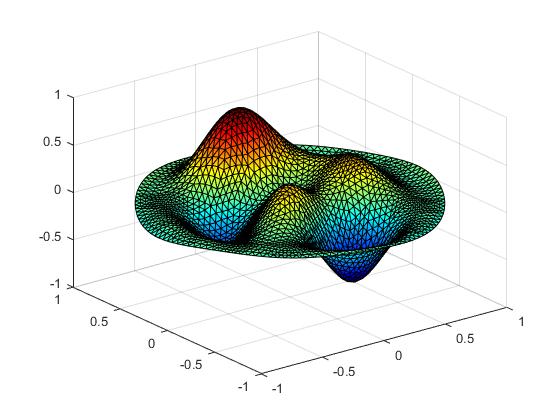} &
\includegraphics[scale=0.3]{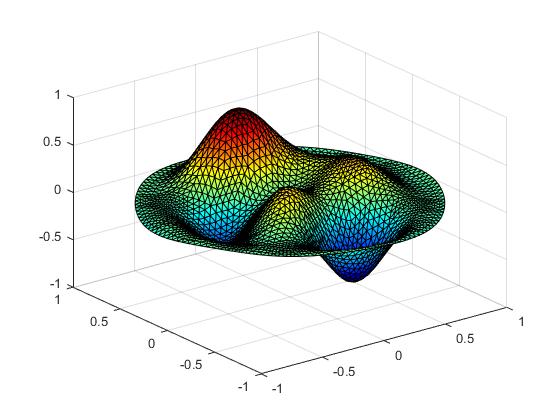} \\
(b) Reconstructed $f_p$ & (c) Reconstructed $f_p$ \\
\includegraphics[scale=0.3]{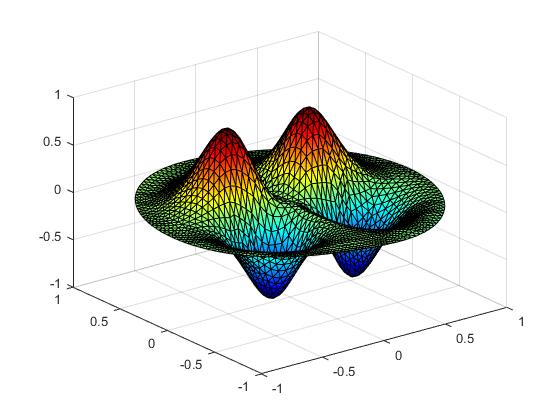} &
\includegraphics[scale=0.3]{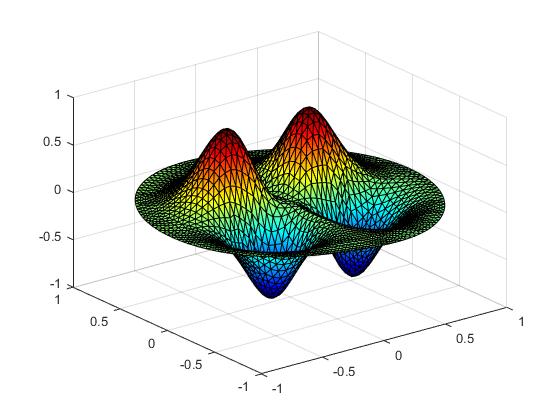} \\
(e) Reconstructed $f_s$ & (f) Reconstructed $f_s$
\end{tabular}
\caption{ Reconstructions of the compressional and shear components of $f$. Figures (b) and (e) are reconstructed from (\ref{ps-real}), whereas (c) and (f) are from (\ref{ps-imag}).}
\label{TH-2}
\end{figure}

In the time-dependent case, we first consider the numerical solution of the acoustic wave equation
\be
\label{eq:2d1}
&&\frac{1}{c_\alpha^2} \partial_{tt}\, u_\alpha(x,t)-\Delta u_\alpha(x,t)=g(t)\,f_\alpha(x),\;\mbox{in}\quad \R^2\times\R^+,\\
\label{eq:2d2}
&&u_\alpha|_{t=0}=\partial_t u_\alpha|_{t=0}=0\quad\mbox{in}\quad\R^2,\quad \alpha=p,s.
\en
To reduce the unbounded solution domain to a bounded computational domain, we use the local absorbing boundary condition
\ben
\partial_\nu u_\alpha+\frac{1}{c_\alpha}\partial_tu_\alpha+\frac{1}{2R}u_\alpha=0 \quad\mbox{on}\quad\partial B_{R}.
\enn
Then the solutions to the acoustic scattering problem (\ref{eq:2d1})-(\ref{eq:2d2}) are computed over $B_{R}$ by using interior penalty discontinuous Galerking method in space and  Newmark method in time. Consequently, the
 data $U(x,t)$ of the Lam\'e system are obtained through
\ben
U(x,t)=\frac{1}{\gamma_p}\mbox{grad}\,u_p(x,t) +\frac{1}{\gamma_s}\overrightarrow{\mbox{curl}}\,u_s(x,t).
\enn
In our numerical examples, we collect the scattering data $U(x,t)|_{\partial B_{R_1}}$ for $t\in[0,T]$ with $T=20>T_0+(R+R_1)/c_s=8$. In Figure \ref{comparison}, we compare the scattering data $\hat{u}(x,\omega)|_{\partial B_{R}}$
at frequencies $\omega=3$ and $\omega=10$ obtained by solving the time-harmonic Lam\'e system and that by applying Fourier transform  (denoted by $\hat{u}'(x,\omega)|_{\partial B_{R}}$) to the time-dependent data $U(x,t)|_{\partial B_{R}}$. It can be seen that the data set via Fourier transformation slightly differs from those time-harmonic data, possibly due to numerical errors in the Fourier transform and in the numerical scheme for solving time-dependent Lam\'e systems as well.
To Fourier transform the time domain data, we use fifteen equally spaced frequencies from 1 to 15. Numerical solutions for reconstructing $f$ and $f_\alpha, \alpha=p,s$ are presented in Figures \ref{TD-1} and \ref{TD-2}, respectively. We conclude from Figures \ref{TH-1}-\ref{TD-2} that satisfactory reconstructions are obtained through the proposed Landweber iterative algorithm.

\begin{figure}[htbp]
\centering
\begin{tabular}{cc}
\includegraphics[scale=0.3]{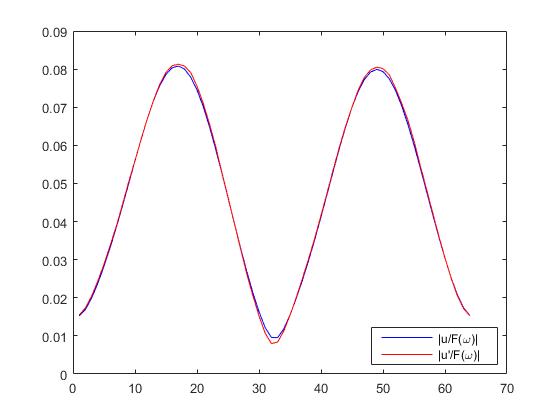} &
\includegraphics[scale=0.3]{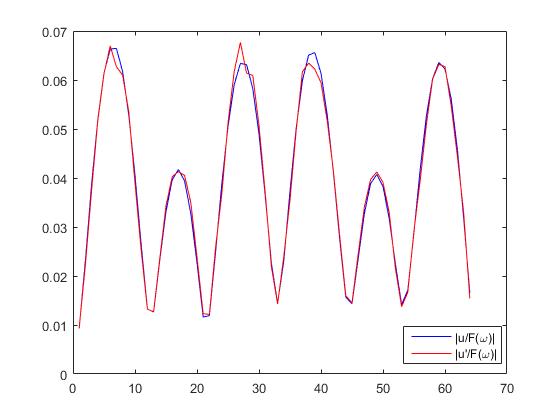} \\
(a) $\omega=3$ & (b) $\omega=10$  \\
\end{tabular}
\caption{Comparison of the scattering data $\hat{u}(x,\omega)/\hat{g}(\omega)$ and $\hat{u}'(x,\omega)/\hat{g}(\omega)$ at $\omega=3, 10$
obtained respectively by solving the time-harmonic Navier equation (blue) and by applying Fourier transform to the time-domain data (red).}
\label{comparison}
\end{figure}

\begin{figure}[htbp]
\centering
\begin{tabular}{cc}
\includegraphics[scale=0.3]{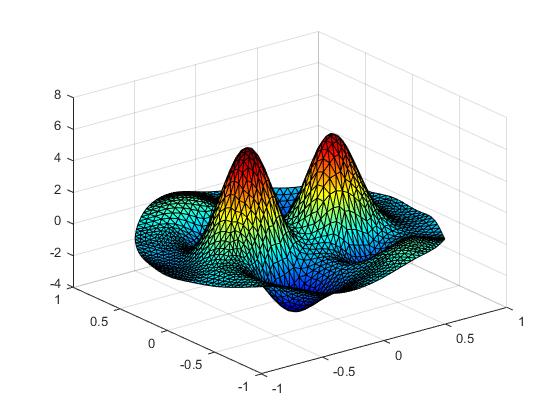} &
\includegraphics[scale=0.3]{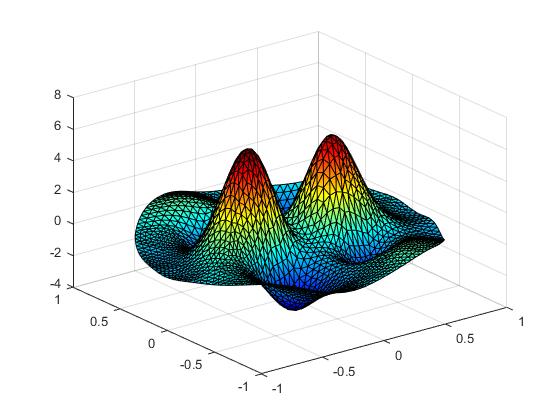} \\
(b) Reconstructed $f_1$ & (c) Reconstructed $f_1$ \\
\includegraphics[scale=0.3]{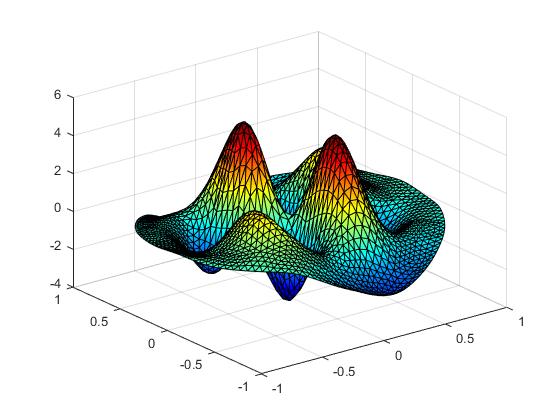} &
\includegraphics[scale=0.3]{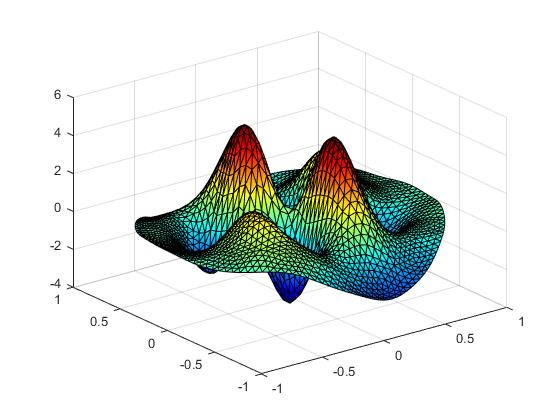} \\
(e) Reconstructed $f_2$ & (f) Reconstructed $f_2$
\end{tabular}
\caption{ Reconstructions of $f=(f_1,f_2)$ from Fourier-transformed time-domain scattering data. Figures (b) and (e) are reconstructed  from
(\ref{Full-real}), whereas (c) and (f) are from  (\ref{Full-imag}).}
\label{TD-1}
\end{figure}

\begin{figure}[htbp]
\centering
\begin{tabular}{cc}
\includegraphics[scale=0.3]{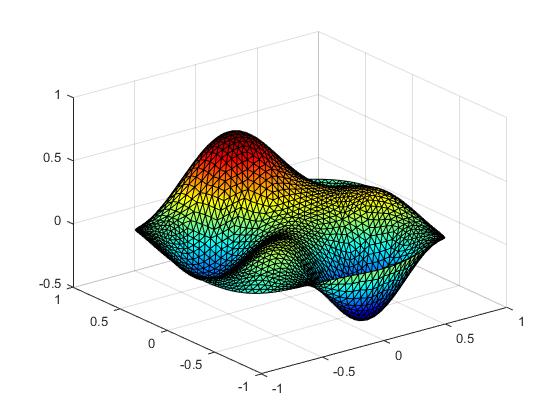} &
\includegraphics[scale=0.3]{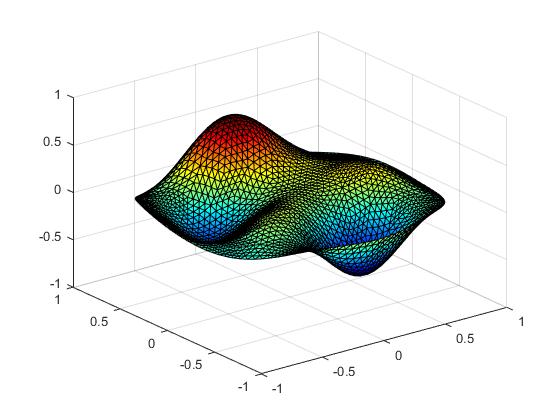} \\
(b) Reconstructed $f_p$ & (c) Reconstructed $f_p$ \\
\includegraphics[scale=0.3]{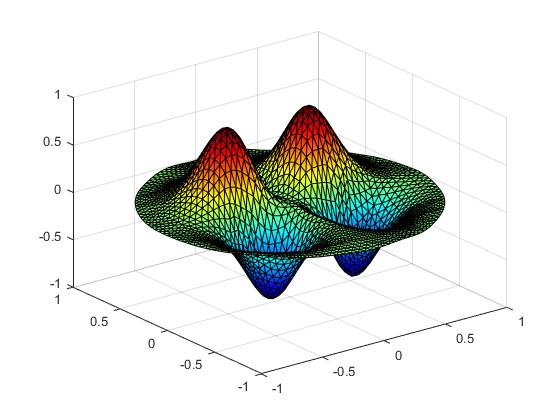} &
\includegraphics[scale=0.3]{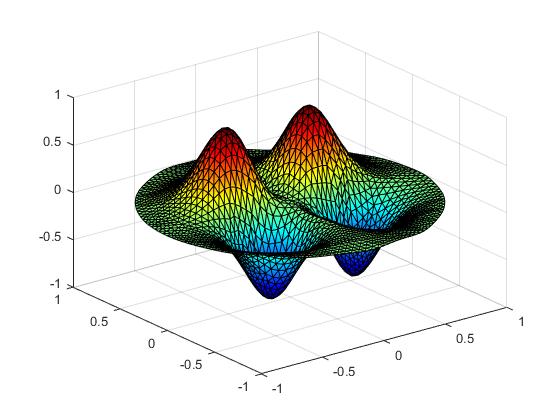} \\
(e) Reconstructed $f_s$ & (f) Reconstructed $f_s$
\end{tabular}
\caption{Reconstructions of the compressional and shear components of $f$ from Fourier-transformed time-domain scattering data. Figures (b) and (e) are reconstructed  from
(\ref{ps-real}), whereas (c) and (f) are from  (\ref{ps-imag}).}
\label{TD-2}
\end{figure}

\subsection{\textcolor{rot1}{Reconstruction of temporal functions}}\label{subsect:temporal}

\textcolor{rot2}{We consider the inverse problem of reconstructing $g$
from the wave fields $\{U(x,t): x\in \Gamma\subset \partial B_R, t\in(0, T)$ for some $T>0$ in three dimensions.}
For simplicity we choose the scalar spatial function to be the delta function, i.e., $f(x)=\delta(x)$. Then the function $W$ (see the proof of
Theorem \ref{TH:temporal}) takes the form
\ben
W(x,\omega)&=& \int_{\R^3}\hat{G}(x-y,\omega)f(y)\,dy\\
&=& \hat{G}(x,\omega)\\
&=& \frac{1}{\mu}\Phi_{k_s}(x) \textbf{I}+\frac{1}{\rho\omega^2}\, \grad_x\,\grad_x^\top\;\left[\Phi_{k_s}(x)-\Phi_{k_p}(x)\right],
\enn
where $\Phi_k(x)=e^{ik|x|}/(4\pi|x|)(k=k_p,k_s)$. Hence,  $f$ is indeed not a non-radiation source for all $\omega\in \R^+$.
In our example, we set the vector temporal function $g(t)$
to be
\ben
& g(t)=(g_1,g_2,g_3)^\top,\\
& g_1(t)=\begin{cases}
\cos(1.5\pi(t-t_1))\exp(-\pi(t-t_1)^2),& t\le T_1, \cr
0,& t>T_1,
\end{cases}\\
& g_2(t)=\begin{cases}
\sin(2\pi(t-t_2))\exp(-\pi(t-t_2)^2),& t\le T_2, \cr
0,& t>T_2,
\end{cases}\\
& g_3(t)=\begin{cases}
\sin(\pi(t-t_3))\exp(-\pi(t-t_3)^2),& t\le T_3, \cr
0,& t>T_3,
\end{cases}
\enn
where $T_1=5$, $T_2=4$, $T_3=3$, $t_1=2$, $t_2=3$ and $t_3=2$. The function pairs $(g_1,\hat{g}_1)$, $(g_2,\hat{g}_2)$ and
$(g_3,\hat{g}_3)$ are plotted in Figures \ref{pulse}, \ref{pulse2} and \ref{pulse3}, respectively.
Moreover, we set $g(t)=0$ for $t<0$.
\textcolor{rot2} {With the choice of $f$ and $g$, the forward time-domain scattering data can be expressed as $U=(u_1,u_2,u_3)$, where
\ben
u_i(x,t)&=& \sum_{j=1}^3\int_{0}^\infty\int_{\R^3} G_{i,j}(x-y, t-s) f(y)g_j(s)\,dx ds\\
&=& \sum_{j=1}^3\int_0^\infty G_{i,j}(x,t-s)g_j(s)\,ds\\
&=& \frac{1}{4\pi\rho|x|^3}\sum_{j=1}^3\left(\frac{x_ix_j}{c_p^2} g_j(t-|x|/c_p)+\frac{1}{c_s^2}(\delta_{ij}|x|^2- x_ix_j)g_j(t-|x|/c_s)
 \right) \\
&\quad&+\frac{1}{4\pi\rho|x|^3}\sum_{j=1}^3 \left(3x_ix_j-\delta_{ij}|x|^2\right) \int_{1/c_p}^{1/c_s} sg_j(t-s)\,ds.
\enn
Taking the Fourier transform gives the data $\hat{U}(x,\omega_i)$ in the Fourier domain.
The sampling frequencies are chosen as
\ben
\omega_j=1+(j-1)h,\quad h=19/49,\quad j=1,\cdots,K,\quad K=50.
\enn
Fixing $\omega_i\in \R^+$ ($i=1,2,\cdots, K$),
 we can always find $x_{0,i}\in \Gamma$ such that $W(x_{0,i},\omega_i)^{-1}$ exists and the value of
 the indicator
\ben
I_1(\omega_i)=[W(x_{0,i},\omega_i)]^{-1}\hat{U}(x_{0,i},\omega_i)
\enn
is identical to $\hat{g}(\omega_i)$. Taking the inverse Fourier transform of the indicator function $I_1(\omega)$ enables us to plot the function $t\rightarrow g_i(t)$ ($i=1,2, 3$).
In our tests we choose $x_{0,i}=(1,1,0)^\top$ uniformly in all $i=1,2,\cdots, K$.
Numerical reconstructions of $\hat{g}_i,i=1,2,3$ from the indicator $I_1$ are presented in Figure \ref{temporal1}.}

\textcolor{rot2} {One can readily observe that the choice of $x_{0,i}$ is not unique.
Our numerics show that $\mbox{Det}( W(x,\omega_i))$
does not vainish for almost all $x\in \partial B_R$. For $\omega_i\in \R^+$, we denote by $\{x_{j,i}: j=1,2,\cdots M\}$ a set of
points lying on $|x|=R$ such that $W(x_{j,i},\omega_i)$ is invertible for each $j$.
To make our inversion scheme computationally stable, we can calculate $I_1(\omega_i)$ using each $x_{j,i}$ ($j=1,2,\cdots,M$)
and then take the average  as the value of  $\hat{g}(\omega_i)$. Hence, we propose another
indicator function in the Fourier domain as following
\ben
I_2(\omega_i):=\frac{1}{M}\sum_{j=1}^M [W(x_{j,i},\omega_i)]^{-1}\hat{U}(x_{j,i},\omega_i),\quad i=1,2,\cdots,K,
\enn
where the time domain data $\{U(x_{j,i}, t): j=1,2,\cdots, M, i=1,2,\cdots, K\}$ are used. In our experiments,
we make use of the boundary data equivalently distributed on $|x|=R$ and set
\ben
x_{j,i}=x_j=(\cos((j-1)d\theta),1,\sin((j-1)h))^\top,\quad  h=2\pi/M, j=1,2,\cdots,M,
\enn uniformly in all $i=1,2,\cdots, K$. Numerics show that such kind of boundary data are adequate for the choice of $f$ and $g$.
Next we consider reconstructions from the noised data
\ben
U_\delta(x,t)=(1+\delta \epsilon(x,t))U(x,t)
\enn
where $\epsilon(x,t)$ is a function whose value is random between -1 and 1, and the noise level $\delta$ is set to be 30\%.
We present the reconstructions of $\hat{g}_j$ ($j=1,2,3$) based on the indicators $I_1$ and $I_2$
in Figures \ref{temporal2} and \ref{temporal3}, respectively.
Reconstructions from the inverse Fourier transform of $I_j$ (that is, the temporal function $g (t)$) are
illustrated in Figures  \ref{temporal4} and \ref{temporal5}, where
 the time-domain data with 30\% noise are again used.
 Comparing Figures \ref{temporal2}, \ref{temporal3},  \ref{temporal4} and \ref{temporal5},
 one may conclude that the inversion scheme using $I_2$ is indeed more computationally stable than $I_1$. }

\begin{figure}[htbp]
\centering
\begin{tabular}{cc}
\includegraphics[scale=0.3]{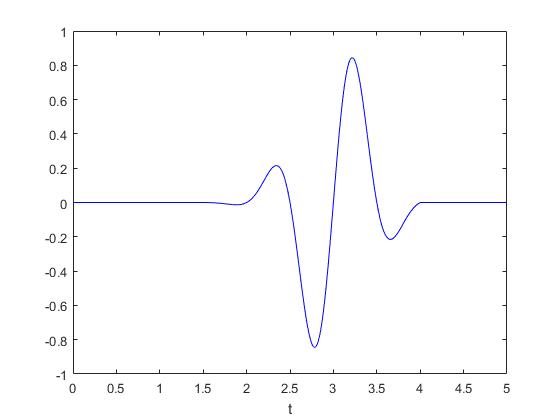} &
\includegraphics[scale=0.3]{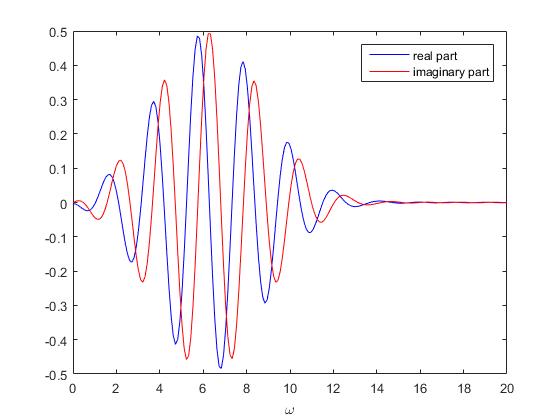} \\
(a) $g_2(t)$ & (b) $\hat{g}_2(\omega)$  \\
\end{tabular}
\caption{\textcolor{rot1}{The exact pulse function $g_2(t)$ and its Fourier transformation $\hat{g}_2(\omega)$.}}
\label{pulse2}
\end{figure}

\begin{figure}[htbp]
\centering
\begin{tabular}{cc}
\includegraphics[scale=0.3]{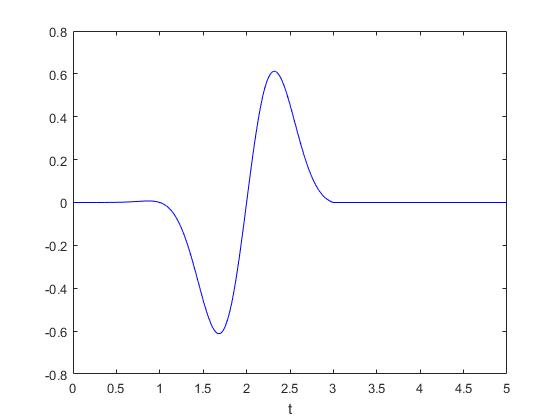} &
\includegraphics[scale=0.3]{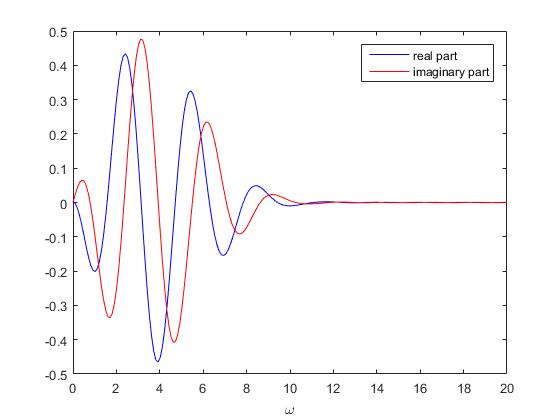} \\
(a) $g_2(t)$ & (b) $\hat{g}_2(\omega)$  \\
\end{tabular}
\caption{\textcolor{rot1}{The exact pulse function $g_3(t)$ and its Fourier transformation $\hat{g}_3(\omega)$.}}
\label{pulse3}
\end{figure}

\begin{figure}[htbp]
\centering
\begin{tabular}{ccc}
\includegraphics[scale=0.24]{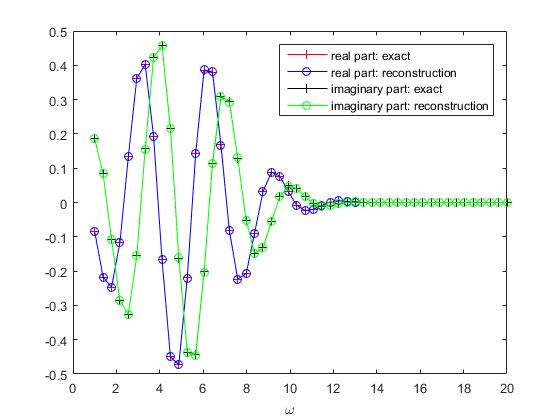} &
\includegraphics[scale=0.24]{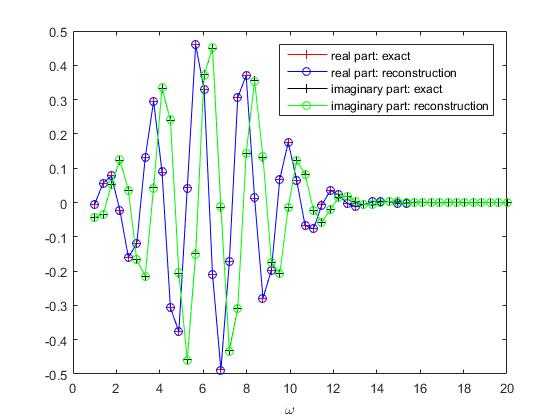} &
\includegraphics[scale=0.24]{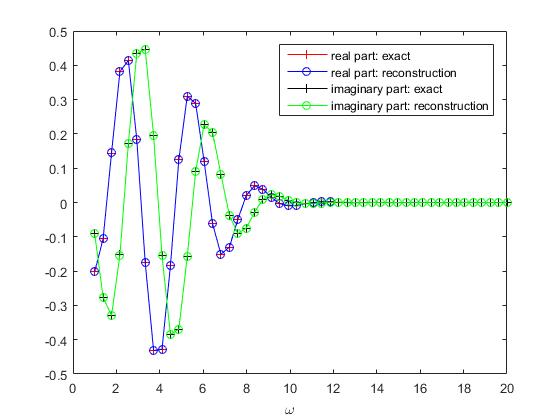} \\
(a) $\hat{g}_1$ & (a) $\hat{g}_2$ & (a) $\hat{g}_3$  \\
\end{tabular}
\caption{\textcolor{rot1}{Reconstruction of temporal functions from $I_1$ without noise.}}
\label{temporal1}
\end{figure}

\begin{figure}[htbp]
\centering
\begin{tabular}{ccc}
\includegraphics[scale=0.24]{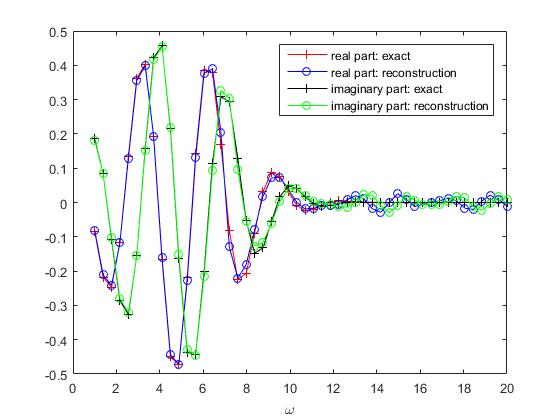} &
\includegraphics[scale=0.24]{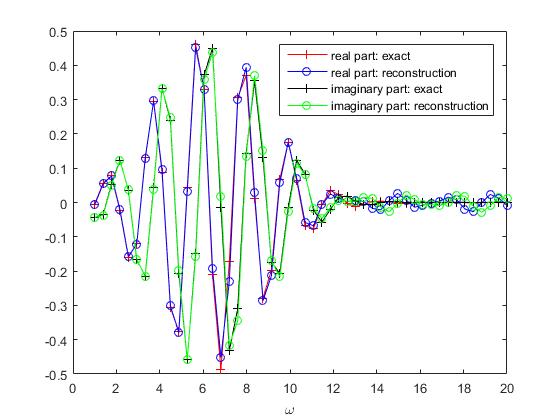} &
\includegraphics[scale=0.24]{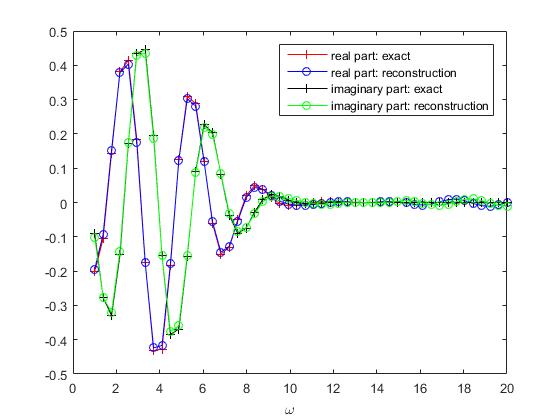} \\
(a) $\hat{g}_1$ & (a) $\hat{g}_2$ & (a) $\hat{g}_3$  \\
\end{tabular}
\caption{\textcolor{rot1}{Reconstruction of temporal functions from $I_1$ with 30\% noise.}}
\label{temporal2}
\end{figure}

\begin{figure}[htbp]
\centering
\begin{tabular}{ccc}
\includegraphics[scale=0.24]{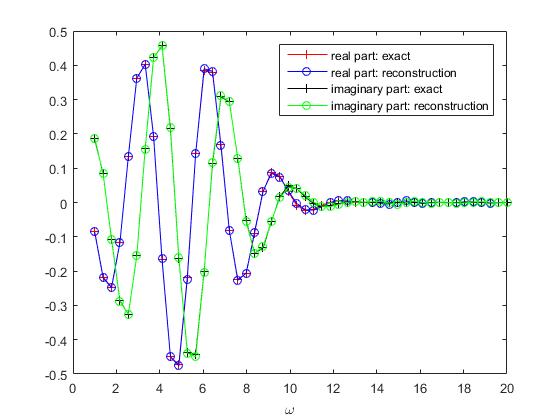} &
\includegraphics[scale=0.24]{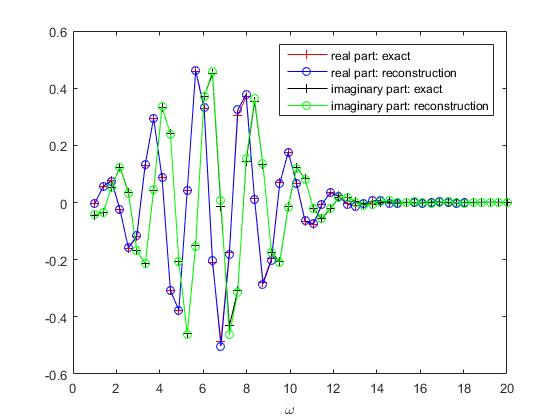} &
\includegraphics[scale=0.24]{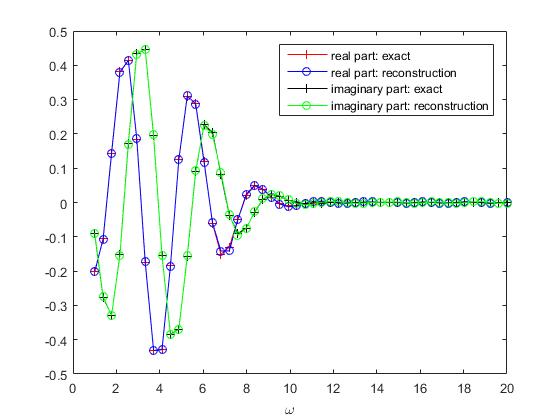} \\
(a) $\hat{g}_1$ & (a) $\hat{g}_2$ & (a) $\hat{g}_3$  \\
\end{tabular}
\caption{\textcolor{rot1}{Reconstruction of temporal functions from $I_2$ with 30\% noise.}}
\label{temporal3}
\end{figure}

\begin{figure}[htbp]
\centering
\begin{tabular}{ccc}
\includegraphics[scale=0.24]{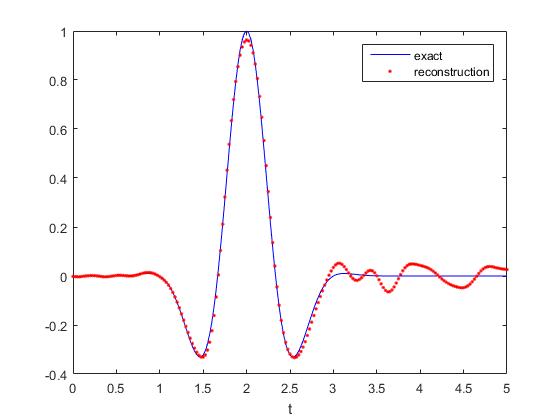} &
\includegraphics[scale=0.24]{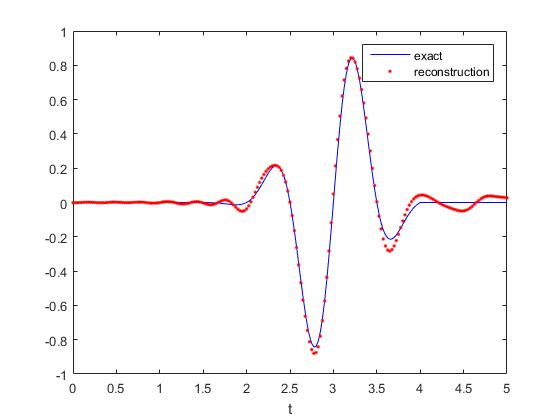} &
\includegraphics[scale=0.24]{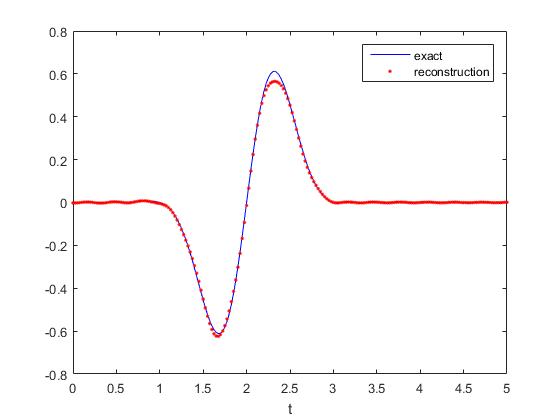} \\
(a) $g_1$ & (a) $g_2$ & (a) $g_3$  \\
\end{tabular}
\caption{\textcolor{rot1}{Reconstruction of temporal functions from $\tilde{I}_1$ with 30\% noise.}}
\label{temporal4}
\end{figure}

\begin{figure}[htbp]
\centering
\begin{tabular}{ccc}
\includegraphics[scale=0.24]{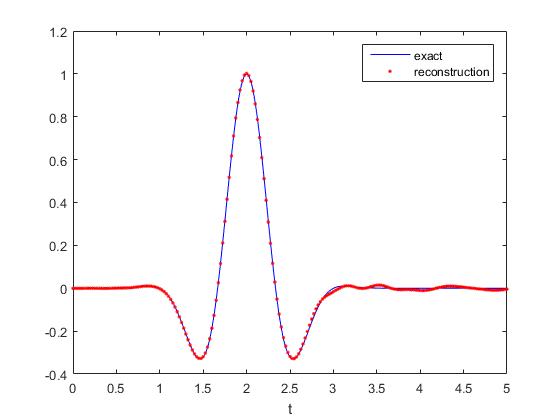} &
\includegraphics[scale=0.24]{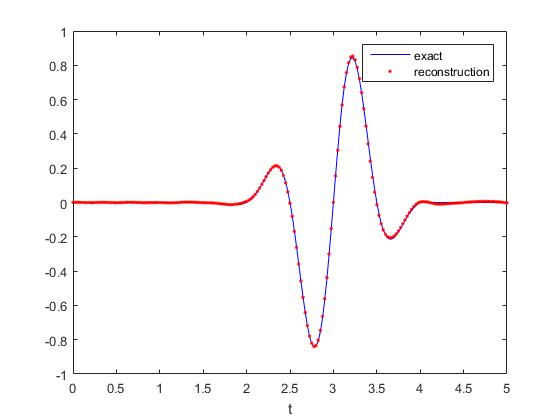} &
\includegraphics[scale=0.24]{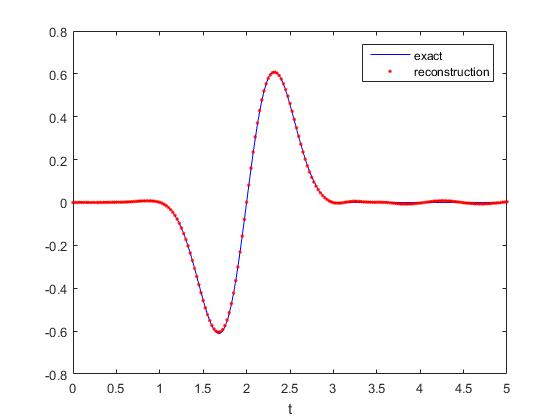} \\
(a) $g_1$ & (a) $g_2$ & (a) $g_3$  \\
\end{tabular}
\caption{\textcolor{rot1}{Reconstruction of temporal functions from $\tilde{I}_2$ with 30\% noise.}}
\label{temporal5}
\end{figure}

\section{Appendix}\label{Appendix}

\begin{lemma}
\label{HD-uniqueness}
Suppose that $S\in (L^2(\R^3))^3$ has a compact support in $B_R$ for some $R>0$, then the Helmholtz decomposition of $S$ is unique.
\end{lemma}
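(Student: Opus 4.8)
The plan is to use linearity: it suffices to prove that the only decomposition of the zero field is the trivial one, i.e.\ that if $S_p\in H^1(\R^3)$ and $S_s\in H_{\curl}(\R^3)$ have compact support in $B_R$, satisfy $\nabla\cdot S_s=0$ in $\R^3$ and $\nabla S_p+\nabla\times S_s=0$ in $\R^3$, then $S_p\equiv 0$ and $S_s\equiv 0$. Extending the components of a prospective decomposition by zero outside $B_R$ is legitimate since their supports lie strictly inside $B_R$, so all the identities below are understood in the sense of distributions on $\R^3$. Since $S_p$ and $S_s$ are then compactly supported $L^2$ functions, their Fourier transforms $\widehat{S_p},\widehat{S_s}$ are continuous on $\R^3$ — in fact entire of exponential type by Paley--Wiener — and this is precisely what lets us pass from ``vanishing for $\xi\neq 0$'' to ``vanishing everywhere''.

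First I would Fourier transform the two relations to get $i\xi\,\widehat{S_p}(\xi)+i\xi\times\widehat{S_s}(\xi)=0$ and $\xi\cdot\widehat{S_s}(\xi)=0$ for all $\xi\in\R^3$. Taking the scalar product of the first identity with $\xi$ and using $\xi\cdot(\xi\times\widehat{S_s})=0$ yields $|\xi|^2\widehat{S_p}(\xi)=0$, hence $\widehat{S_p}(\xi)=0$ for $\xi\neq 0$, and $\widehat{S_p}\equiv 0$ by continuity, so $S_p\equiv 0$. Substituting back gives $\xi\times\widehat{S_s}(\xi)=0$, so for $\xi\neq 0$ the vector $\widehat{S_s}(\xi)$ is parallel to $\xi$, say $\widehat{S_s}(\xi)=c(\xi)\xi$; then $0=\xi\cdot\widehat{S_s}(\xi)=c(\xi)|\xi|^2$ forces $c(\xi)=0$, so $\widehat{S_s}$ vanishes off the origin and hence everywhere, giving $S_s\equiv 0$. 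This also covers the gauge freedom: any other compactly supported decomposition differs by a pair solving exactly the homogeneous problem just treated.

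A Fourier-free variant I could present instead uses the same structure: applying $\nabla\cdot$ to $\nabla S_p+\nabla\times S_s=0$ gives $\Delta S_p=0$ on $\R^3$, so $S_p$ is a harmonic function with compact support and therefore $S_p\equiv 0$ (by real-analyticity of harmonic functions, or the maximum principle); then $\nabla\times S_s=0$ together with $\nabla\cdot S_s=0$ gives $\Delta S_s=\nabla(\nabla\cdot S_s)-\nabla\times(\nabla\times S_s)=0$ componentwise, and the same argument yields $S_s\equiv 0$. The only point requiring a little care — and the main potential obstacle — is the regularity bookkeeping: at the $H_{\curl}$ level the vector identity for $\Delta S_s$ must first be read distributionally, after which $\nabla\times S_s=0$ and $\nabla\cdot S_s=0$ upgrade $S_s$ to a smooth (indeed harmonic) field by elliptic regularity before the compact-support argument applies. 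The Fourier approach avoids this entirely by working with the automatically continuous transforms, which is why I would take it as the main line of proof.
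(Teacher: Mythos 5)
Your proposal is correct, and your main line of argument is genuinely different from the paper's. The paper argues directly in physical space: it forms the difference of two decompositions, takes the divergence of $\nabla(S_p-S_p')+\nabla\times(S_s-S_s')=0$ to get $\Delta(S_p-S_p')=0$ in $B_R$, and concludes $S_p=S_p'$ from the maximum principle together with the zero boundary values on $\partial B_R$ forced by the compact supports; it then applies $\nabla\times$ to the same identity, uses $\nabla\times\nabla\times = \nabla\,\divv-\Delta$ and $\divv(S_s-S_s')=0$ to get $\Delta(S_s-S_s')=0$, and finishes the vector part analogously. Your ``Fourier-free variant'' is essentially this argument (with ``compactly supported harmonic $\Rightarrow$ zero'' in place of the maximum principle, and with the sensible remark that the vector Laplacian identity must first be read distributionally and upgraded by Weyl/elliptic regularity). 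Your primary Fourier-side proof instead converts the homogeneous relations into the algebraic statements $|\xi|^2\widehat{S_p}(\xi)=0$, $\xi\times\widehat{S_s}(\xi)=0$, $\xi\cdot\widehat{S_s}(\xi)=0$, and uses the continuity (indeed entirety, by Paley--Wiener) of the transforms of compactly supported $L^2$ functions to pass from vanishing off the origin to vanishing everywhere; this buys you a cleaner treatment of the low regularity of $S_s\in H_{\curl}$, since no elliptic regularity or boundary-value discussion is needed, at the cost of invoking the Fourier/Paley--Wiener machinery, whereas the paper's route stays elementary (maximum principle on the ball) and purely local. Both arguments are complete; the only bookkeeping point in yours --- that the distributional Fourier identities hold pointwise for the continuous representatives --- is exactly the point you already flag and handle.
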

\begin{proof}
 Due to the Helmholtz decomposition, every $S\in (L^2(\R^3))^3$ admits a decomposition:
\ben
S=\nabla\,S_p+\nabla\times\,S_s,\quad \nabla\cdot S_s=0,
\enn
where
\ben
S_p\in H^1(B_{R}),\quad S_s\in H_{\curl}(B_{R}):=\{U: U\in L^2 (B_{R})^3, \curl U\in L^2 (B_{R})^3\}
\enn
 also have compact support in $B_R$. Suppose that $S$ admits another orthogonal decomposition $S=\nabla\,S_p'+\nabla\times\,S_s', \nabla\cdot S_s'=0$. Then we have
\be
\label{HD-uniqueness1}
\nabla\,(S_p-S_p')+\nabla\times\,(S_s-S_s')=0.
\en
Taking the divergence of both sides of (\ref{HD-uniqueness1}) gives $\Delta(S_p-S_p')=0$ in $B_R$, i.e., $S_p-S_p'$ is harmonic over $B_R$. Note that $S_p-S_p'=0$ on $\partial B_R$. Applying the maximum principle for harmonic functions yields $S_p=S_p'$ in $B_R$.
On the other hand, applying $\nabla\times$ to the both sides of (\ref{HD-uniqueness1}) we obtain
\ben
0=\nabla\times(\nabla\times\;(S_s-S_s'))=\nabla(\nabla\cdot\,(S_s-S_s')) -\Delta\,(S_s-S_s')= -\Delta\,(S_s-S_s').
\enn
Then the relation $S_s=S_s'$ in $B_R$ can be proved analogously. This completes the proof.
\end{proof}
In the following lemma, the notation $\textbf{I}_{n\times n}$ denotes the unit matrix in $\R^{n\times n}$ for $n\geq 2$.
\begin{lemma}\label{Lem:eigenvalue}
Let $\xi=(\xi_1,\cdots,\xi_n)^\top\in \R^{n\times 1}$ and $A(\xi)=\mu |\xi|^2 \textbf{I}_{n\times n}+(\lambda+\mu)\xi\otimes \xi\in \R^{n\times n}$. Then the eigenvalues $\tau_j$ ($j=1,2,\cdots,n$) of $A(\xi)$ are given by
\ben
\tau_1=(\lambda+2\mu)|\xi|^2,\quad \tau_2=\cdots=\tau_n=\mu\,|\xi|^2.
\enn
\end{lemma}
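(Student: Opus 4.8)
The plan is to diagonalize $A(\xi)$ explicitly, exploiting the rank-one structure of the term $\xi\otimes\xi=\xi\xi^{\top}$. First I would dispose of the trivial case $\xi=0$: then $A(0)$ is the zero matrix, all its eigenvalues are $0$, and the claimed values $(\lambda+2\mu)|\xi|^2$ and $\mu|\xi|^2$ also vanish, so the statement holds. Assume from now on that $\xi\neq0$.

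Next I would decompose $\R^n$ as the orthogonal direct sum of the line $\mathrm{span}\{\xi\}$ and the hyperplane $\xi^{\perp}:=\{v\in\R^n:\ \xi^{\top}v=0\}$, and evaluate $A(\xi)$ on each summand. Since $(\xi\otimes\xi)\xi=\xi(\xi^{\top}\xi)=|\xi|^2\xi$, we obtain
\[
A(\xi)\xi=\mu|\xi|^2\xi+(\lambda+\mu)|\xi|^2\xi=(\lambda+2\mu)|\xi|^2\,\xi ,
\]
so $\xi$ is an eigenvector with eigenvalue $\tau_1=(\lambda+2\mu)|\xi|^2$. On the other hand, for any $v\in\xi^{\perp}$ one has $(\xi\otimes\xi)v=\xi(\xi^{\top}v)=0$, hence $A(\xi)v=\mu|\xi|^2\,v$; thus every vector of the $(n-1)$-dimensional space $\xi^{\perp}$ is an eigenvector with eigenvalue $\mu|\xi|^2$.

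Finally I would check that these exhaust the spectrum with the stated multiplicities. This is immediate: $A(\xi)$ is real symmetric, hence orthogonally diagonalizable, and we have just exhibited an orthogonal eigenbasis consisting of $\xi$ (eigenvalue $(\lambda+2\mu)|\xi|^2$) together with any orthonormal basis of $\xi^{\perp}$ ($n-1$ vectors, each with eigenvalue $\mu|\xi|^2$). Equivalently, one may expand the characteristic polynomial directly: writing $A(\xi)-\tau\textbf{I}=(\mu|\xi|^2-\tau)\textbf{I}+(\lambda+\mu)\xi\xi^{\top}$ and using the determinant formula for a scalar matrix plus a rank-one perturbation gives $\det(A(\xi)-\tau\textbf{I})=(\mu|\xi|^2-\tau)^{n-1}\big((\lambda+2\mu)|\xi|^2-\tau\big)$, which has exactly the asserted roots. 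There is essentially no obstacle here; the only point meriting a line of care is the claim that the displayed eigenvectors account for all eigenvalues, and the symmetry of $A(\xi)$ (or the explicit characteristic polynomial) settles this at once.
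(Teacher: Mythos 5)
Your proof is correct, but your main argument takes a different route from the paper. The paper computes the characteristic polynomial directly via the rank-one determinant identity: it writes $A(\xi)-\tau\textbf{I}=B+VV^\top$ with $B=(\mu|\xi|^2-\tau)\textbf{I}$ and $V=\sqrt{\lambda+\mu}\,\xi$, and applies $\mbox{Det}(B+VV^\top)=(1+V^\top B^{-1}V)\,\mbox{Det}(B)$ to obtain $[(\lambda+2\mu)|\xi|^2-\tau](\mu|\xi|^2-\tau)^{n-1}$ — essentially the alternative you sketch in your closing lines. Your primary argument instead exhibits an explicit orthogonal eigenbasis ($\xi$ and the hyperplane $\xi^\perp$) and invokes symmetry of $A(\xi)$ to conclude the multiplicities. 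Your route is more elementary and slightly more careful: it covers the degenerate case $\xi=0$, it does not implicitly require $B$ to be invertible (the paper's formula, as written, needs $\tau\neq\mu|\xi|^2$, which one then removes by a polynomial/continuity argument), and it avoids writing $\sqrt{\lambda+\mu}$, which presupposes $\lambda+\mu\geq 0$ (true here, since $\mu>0$ and $3\lambda+2\mu>0$, but unstated). The paper's determinant computation, on the other hand, delivers the full characteristic polynomial in one line without choosing eigenvectors. Both arguments are sound and of comparable length.
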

\begin{proof}
Set $\tilde{A}=A-\tau \textbf{I}_{n\times n}$. We may rewrite $\tilde{A}$ in the form $\tilde{A}=B+VV^\top$, where
\ben
B=(\mu |\xi|^2-\tau)\, \textbf{I}_{n\times n},\quad V=\sqrt{\lambda+\mu}\, \xi.
\enn
Straightforward calculations show that
\ben
\mbox{Det}(\tilde{A})&=&\mbox{Det}(B+VV^\top)\\
&=&(1+V^\top\, B^{-1}\,V)\,\mbox{Det}(B)\\
&=& \left(1+\frac{(\lambda+\mu)|\xi|^2}{\mu |\xi|^2-\tau}\right)\,(\mu |\xi|^2-\tau)^n\\
&=&[(\lambda+2\mu)|\xi|^2-\tau](\mu |\xi|^2-\tau)^{n-1},
\enn
which implies the eigenvalues of $A$.
\end{proof}

\begin{lemma}\label{Gronwall}(Grownwall-type inequality)
 Let $T>0$ and $u\in L^2(0,T)$ be  nonnegative  and fulfill, for almost every $t\in(0,T)$, the inequality
\begin{equation}\label{Gro1}u(t)\leq a(t)+\int_0^t b(s)\,u(s)\,ds,
\end{equation}
where $a\in L^2(0,T)$ and $b\in\mathcal C([0,T])$ are two nonnegative functions. Then, for almost every $t\in(0,T)$, we have

\begin{equation}\label{Gro2} u(t)\leq a(t)+\int_0^ta(s)b(s)e^{\int_s^tb(\tau)d\tau}ds.
\end{equation}
\end{lemma}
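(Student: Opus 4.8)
The plan is to reduce the integral inequality \eqref{Gro1} to a first-order linear differential inequality for the running integral, and then integrate that with the classical integrating factor. Concretely, I would set $v(t):=\int_0^t b(s)u(s)\,ds$ on $[0,T]$. Since $b\in\mathcal C([0,T])$ is bounded and $u\in L^2(0,T)\subset L^1(0,T)$, the product $bu$ lies in $L^1(0,T)$, so $v$ is absolutely continuous on $[0,T]$, $v(0)=0$, and $v'=bu$ almost everywhere.

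Next I would feed in the hypothesis: from $u\le a+v$ and $b\ge0$ one gets $v'(t)=b(t)u(t)\le b(t)a(t)+b(t)v(t)$ for a.e.\ $t$. Introducing the integrating factor $\phi(t):=\exp\big(-\int_0^t b(\tau)\,d\tau\big)$, which is positive and absolutely continuous with $\phi'=-b\phi$, the function $\phi v$ is absolutely continuous and satisfies $\tfrac{d}{dt}(\phi v)=\phi\,(v'-bv)\le \phi\, b\, a$ a.e. Integrating over $(0,t)$ — permissible because $\phi v$ is absolutely continuous and $\phi b a$ is integrable, being the bounded function $\phi b$ times $a\in L^1$ — and using $v(0)=0$ gives $\phi(t)v(t)\le\int_0^t\phi(s)b(s)a(s)\,ds$. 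Dividing by $\phi(t)>0$ and using $\phi(s)/\phi(t)=\exp\big(\int_s^t b(\tau)\,d\tau\big)$ yields $v(t)\le\int_0^t a(s)b(s)\,e^{\int_s^t b(\tau)\,d\tau}\,ds$.

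Finally I would substitute this bound back into \eqref{Gro1}: for a.e.\ $t\in(0,T)$, $u(t)\le a(t)+v(t)\le a(t)+\int_0^t a(s)b(s)\,e^{\int_s^t b(\tau)\,d\tau}\,ds$, which is exactly \eqref{Gro2}. There is no serious obstacle here; the only point requiring a little care is the bookkeeping of the almost-everywhere clauses — the differential inequality for $v$ holds only a.e., but since $v$ (hence $\phi v$) is absolutely continuous one may still integrate it, and the conclusion for $u$ is again asserted only a.e., consistent with the hypothesis. The argument is entirely elementary once the auxiliary function $v$ and the integrating factor $\phi$ are chosen; no fixed-point or iteration scheme is needed, although an equivalent route would be to iterate \eqref{Gro1} in Picard fashion and recognize the resulting series of nested integrals of $b$ as the exponential.
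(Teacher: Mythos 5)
Your proof is correct and follows essentially the same route as the paper: your $\phi(t)v(t)$ is exactly the paper's auxiliary function $Y(t)=e^{-\int_0^t b(s)\,ds}\int_0^t b(s)u(s)\,ds$, and both arguments bound its derivative by $a\,b\,e^{-\int_0^t b}$ using \eqref{Gro1}, integrate from $0$ using $Y(0)=0$, and then apply \eqref{Gro1} once more to conclude \eqref{Gro2}. The only difference is presentational (integrating-factor language versus a direct computation of $Y'$), with the same handling of the a.e.\ and absolute-continuity issues.
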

\begin{proof}
We consider $Y$ defined, for almost every $t\in(0,T)$, by
$$Y(t):=e^{-\int_0^tb(s)ds}\int_0^tb(s)u(s)ds$$
and we remark that $Y\in H^1(0,T)$ and satisfies $Y(0)=0$. Then, for almost every $t\in(0,T)$, we find
$$Y'(t)=b(t)u(t)e^{-\int_0^tb(s)ds}-b(t)e^{-\int_0^tb(s)ds}\int_0^tb(s)u(s)ds.$$
On the other hand, in view of \eqref{Gro1}, for almost every $t\in(0,T)$, we get
$$\int_0^tb(s)u(s)ds\geq u(t)-a(t)$$
and we deduce that
$$Y'(t)\leq a(t)b(t)e^{-\int_0^tb(s)ds}.$$
Integrating on both side of this inequality we get
$$\int_0^tY'(s)ds\leq \int_0^ta(s)b(s)e^{-\int_0^sb(\tau)d\tau}ds,\quad t\in(0,T).$$
On the other hand, since $Y\in H^1(0,T)$ and satisfies $Y(0)=0$, by density one can check that
$Y(t)=\int_0^tY'(s)ds$ which implies that
$$e^{-\int_0^tb(s)ds}\int_0^tb(s)u(s)ds\leq \int_0^ta(s)b(s)e^{-\int_0^sb(\tau)d\tau}ds$$
and by the same way, for almost every $t\in(0,T)$, the following inequality
$$\int_0^tb(s)u(s)ds\leq e^{\int_0^tb(s)ds}\left(\int_0^ta(s)b(s)e^{-\int_0^sb(\tau)d\tau}ds\right)=\int_0^ta(s)b(s)e^{\int_s^tb(\tau)d\tau}ds.$$
Finally, applying again \eqref{Gro1}, for almost every $t\in(0,T)$, we find
$$u(t)\leq a(t)+\int_0^tb(s)u(s)ds\leq a(t)+\int_0^ta(s)b(s)e^{\int_s^tb(\tau)d\tau}ds.$$
This proves \eqref{Gro2}.\end{proof}

\section*{Acknowledgement}
The work of G. Bao is supported in part by a NSFC Innovative Group Fun (No.11621101), an Integrated Project of the Major Research Plan of NSFC (No. 91630309), and an NSFC A3 Project (No. 11421110002). The work of G. Hu is supported by the NSFC grant (No. 11671028), NSAF grant (No. U1530401) and the 1000-Talent Program of Young Scientists in China. G. Hu and Y. Kian would like to thank Prof. M. Yamamoto for helpful discussions. The work of T. Yin is partially supported by the NSFC Grant (No. 11371385; No. 11501063).

\end{document}